\newtheorem{lemma}{Lemma}[section]
\newtheorem{theorem}{Theorem}[section]
\newtheorem{example}{Example}[section]
\newtheorem{remark}{\em Remark}[section]
\journal{Computers and Mathematics with Applications}
\begin{document}
\begin{frontmatter}
\title{Numerical schemes of the time tempered fractional Feynman-Kac equation
}
\author[mymainaddress]{W.H. Deng}
\ead{dengwh@lzu.edu.cn}
\author[mymainaddress]{Z.J. Zhang\corref{mycorrespondingauthor}}
\cortext[mycorrespondingauthor]{Corresponding author.}
\ead{zhjzhang14@lzu.edu.cn}
\address[mymainaddress]{School of Mathematics and Statistics, Gansu Key Laboratory of Applied Mathematics and Complex Systems, Lanzhou University, Lanzhou 730000, P.R. China}

\begin{abstract}
This paper focuses on providing the computation methods for the backward time tempered fractional Feynman-Kac equation, being one of the models recently proposed in [Wu, Deng, and Barkai, Phys. Rev. E, 84 (2016) 032151].
The discretization for  the  tempered  fractional substantial derivative is derived, and the corresponding finite difference and finite element schemes are designed with well established stability and convergence. The performed numerical experiments show the effectiveness of the presented schemes.
\end{abstract}

\begin{keyword}
tempered fractional substantial derivative \sep Feynman-Kac equation \sep finite difference approximation \sep finite element approximation
\end{keyword}

\end{frontmatter}

\section{Introduction} \label{introduction}

Anomalous diffusion  is one of the most ubiquitous phenomena in nature, being detected in almost all the scientific fields.
In particular, based on the continuous time random walk (CTRW) model with the power law waiting time distribution having divergent first moment and/or the power law jump length distribution having divergent second moment,
 the corresponding fractional subdiffusive and/or superdiffusive  equations are derived \cite{Metzler:00,Schneider:89}
 (for their numerical methods, see, e.g., \cite{Deng:13,Ervin:07,Gao:16,Meerschaert:13,Pang:12,Tian:15,Zhang:11}).
But most of the time, the more practical choice for modelling the motion of the particles should use the distributions of the waiting time and jump length with semi-heavy tails, due to the finite life span
and/or the bounded physical space of the biological particles.
These can be realized by tempering the distributions with heavy tails, e.g., truncating the heavy tail of the power law distribution. What tempering does is to introduce a scale. Exponentially tempering the power-law distributions
seems to be the most popular choice \cite{Cartea:07,Meerschaert:09}, since it has both the mathematical and technique advantages \cite{Baenmer:10,Sabzikar:15}; and the probability densities of the tempered stable process solve the tempered fractional diffusion equation.

There are many physical quantities that can be used to characterize the motion features of a Brownian particle, e.g., the Brownian functionals \cite{Majumdar:05}. With the rapid development of the research of anomalous diffusion, the functionals of the non-Brownian particle naturally attract the interests of scientists \cite{Carmi:10,Turgeman:09,Xiaochao:16}.
The functional is defined as $A=\int_0^tU(x_0(\tau))d\tau$,
 where $U(x_0)$ is a prescribed function and $x_0(t)$ is a random process generated by a non-Brownian particle.
The choice of $U(x_0)$ depends on the concrete applications. For example, in the kinetic studies of chemical reactions that take place exclusively in some given domain \cite{Agmon:84,Bar:98,Carmi:10}, we take $U(x_0)=1$ in this particular domain and zero otherwise. For inhomogeneous discorder dispersive systems, $U(x_0)$ is taken as $x_0$ or $x_0^2$ \cite{Carmi:10}, etc.
In particular, based on the sub-diffusive CTRW, a widely investigated process being continually used
to characterize the motion of particles in disordered systems \cite{scher:75},
the fractional Feynman-Kac equation is derived in \cite{Carmi:10,Turgeman:09}.
 More recently, taking the tempered power law function as the waiting time and/or jump length distribution(s) in the CTRW model,
 the tempered Feynman-Kac equations are also derived in \cite{Xiaochao:16}, which govern the distribution of the functionals of the tempered anomalous diffusion.

In this paper, we consider the numerical schemes of the backward tempered fractional Feynman-Kac equation proposed in \cite[eq. 15]{Xiaochao:16}, i.e.,
\begin{eqnarray}\label{eq-mode2}
&&\frac{\partial }{\partial t}G_{x_0}(p,t)=\left[\lambda^{\gamma} D_t^{1-\gamma,\lambda}-\lambda\right]\left[G_{x_0}(p,t)-e^{-pU(x_0)t}\right]\nonumber\\
&&~~~~~~~~~~~~~~~~~~ -pU(x_0)G_{x_0}(p,t)+K_{\gamma} D_t^{1-\gamma,\lambda}\frac{\partial ^2}{\partial x_0^2}G_{x_0}(p,t),
\end{eqnarray}
where $G_{x_0}(p,t)=\int_0^{\infty}e^{-pA}G_{x_0}(A,t)dA,\,{\rm Re}(p)>0$ is the Lapace transform of $G_{x_0}(A,t)$,
 being the probability density function (PDF) of $A=\int_0^t U([x_0(\tau)]d\tau$ at time $t$ for a process starting at $x_0(0)$; $U(x_0)\ge0$ (then $ A\ge 0$) is a prescribed function;
the diffusion coefficient $K_\gamma>0$ and the tempered parameter  $\lambda\ge 0$ are constants, $\gamma\in(0,1)$;
and the operator $D_t^{1-\gamma,\lambda}$ on the right-hand side of (\ref{eq-mode2}) is the  tempered  fractional substantial derivative, being defined as,
\begin{eqnarray}\label{eq-mode2-1}
&&D_t^{1-\gamma,\lambda}G_{x_0}(p,t)=\nonumber\\
&&~~~~\frac{1}{\Gamma(\gamma)}\left[\lambda+pU(x_0)+\frac{\partial }{\partial t}\right]\int_0^t\frac{e^{-(t-\xi)(\lambda+pU(x_0))}}
{(t-\xi)^{1-\gamma}}G_{x_0}(p,\xi) d\xi,
\end{eqnarray}
which is a time-space coupled operator.
Moreover, if $\lambda=0$ and $p\not=0$,  Eq. (\ref{eq-mode2}) reduces to the standard backward fractional
Feynman-Kac equation studied in \cite{Chen:15,Deng:15}; if $\lambda\not=0$ and $p=0$, Eq. (\ref{eq-mode2})
reduces to the time tempered fractional equation originally proposed in \cite{Meerschaert:09} and numerically solved in \cite{Hanert:14}; if $\lambda=p=0$, Eq. (\ref{eq-mode2}) reduces to the anomalous subdiffusive equation
discussed in, e.g., \cite{Chen:09,Li:09,Zhang:11,Zhuang:08}.

The outline of this paper is as follows. In Sec. \ref{sec2}, we derive an equivalent form of (\ref{eq-mode2})
and give the discretization of the tempered fractional substantial derivative. The finite difference scheme and the related theoretical analysis for solving the model (\ref{eq-mode2}) are provided in Sec. \ref{sec3}. Then the finite element approximation and its numerical analysis for the model (\ref{eq-mode2}) are presented in Sec. \ref{sec4}.
Numerical simulation results are reported in Sec. \ref{sec5} to confirm the effectiveness of the given scheme.
And we conclude the paper in  the last section.

\section{Equivalent form of (\ref{eq-mode2}) and discretization of the tempered fractional substantial derivative}\label{sec2}
This section focuses on deriving the equivalent form of (\ref{eq-mode2}) and the discretization of the tempered fractional substantial derivative. Under the assumption that the solution of (\ref{eq-mode2}) is sufficiently regular, we firstly derive its equivalent form.  For $0<\gamma<1$, assuming that $v(t)$ is regular enough,  then ${}_0 I_t^{\gamma}v(t)=\frac{1}{\Gamma(\gamma)}\int_0^t{(t-\xi)^{\gamma-1}}v(\xi)d\xi$, ${}_0 D_t^{\gamma}v(t)=\frac{\partial}{\partial t}\left({}_0I_t^{1-\gamma}v(t)\right)$, and ${}_0^CD_t^{\gamma}v(t)
 ={}_0I_t^{1-\gamma}\left(\frac{\partial v(t)}{\partial t}\right)$ are, respectively, defined as the Riemann-Liouville integral, the Riemann-Liouville derivative, and the Caputo derivative \cite{Podlubny:99,Zhou:14}; and there hold that
\begin{eqnarray} \label{IdenttityOperator}
{}_0D_t^{\gamma}\,{}_0I_t^{\gamma}v(t)=v(t)
\end{eqnarray}
and
\begin{eqnarray} \label{IdenttityOperator2}
{}_0I_t^{\gamma}\,{}_0D_t^{\gamma }v(t)=v(t).
\end{eqnarray}
See  \ref{appendix1} for the proof of Eqs. (\ref{IdenttityOperator}) and (\ref{IdenttityOperator2}).
And it is easy to check that Eq. (\ref{eq-mode2-1}) can be rewritten as
\begin{equation}\label{substantialdefinition}
\begin{array}{l}
D_t^{1-\gamma,\lambda}G_{x_0}(p,t)\\
=\left[\lambda+pU(x_0)+\frac{\partial }{\partial t}\right]
\left( e^{-(\lambda+pU(x_0))t}{}_0I_t^{\gamma}\left(e^{(\lambda+pU(x_0))t}G_{x_0}(p,t)\right)\right).
\end{array}
\end{equation}
In fact, we can further rewrite it as

\begin{lemma}\label{substantilemma1}
For $0<\gamma<1$, there holds
\begin{eqnarray} \label{eq6}
D_t^{1-\gamma,\lambda}G_{x_0}(p,t)=e^{-(\lambda+pU(x_0))t}{}_0D_t^{1-\gamma}\left(e^{(\lambda+pU(x_0))t}G_{x_0}(p,t)\right).
\end{eqnarray}
\end{lemma}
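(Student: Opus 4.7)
The plan is to start from the reformulation already established in equation (\ref{substantialdefinition}) and show by a direct product-rule computation that the factor of $\lambda+pU(x_0)$ cancels the derivative hitting $e^{-(\lambda+pU(x_0))t}$, leaving exactly the Riemann--Liouville derivative of the conjugated function.

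Concretely, writing $\mu:=\lambda+pU(x_0)$ and $g(t):=G_{x_0}(p,t)$ for readability, I would set $h(t):={}_0I_t^{\gamma}\bigl(e^{\mu t}g(t)\bigr)$ so that (\ref{substantialdefinition}) reads
\begin{equation*}
D_t^{1-\gamma,\lambda}g(t)=\Bigl[\mu+\tfrac{\partial}{\partial t}\Bigr]\bigl(e^{-\mu t}h(t)\bigr).
\end{equation*}
Applying the product rule gives $\tfrac{\partial}{\partial t}(e^{-\mu t}h(t))=-\mu e^{-\mu t}h(t)+e^{-\mu t}h'(t)$, so the $\mu e^{-\mu t}h(t)$ pieces cancel and one is left with $e^{-\mu t}h'(t)$.

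The final step is to recognize what $h'(t)$ is. Since the Riemann--Liouville derivative is defined by ${}_0D_t^{\alpha}v(t)=\tfrac{\partial}{\partial t}{}_0I_t^{1-\alpha}v(t)$, choosing $\alpha=1-\gamma$ yields
\begin{equation*}
{}_0D_t^{1-\gamma}v(t)=\tfrac{\partial}{\partial t}{}_0I_t^{\gamma}v(t),
\end{equation*}
which is exactly $h'(t)$ with $v(t)=e^{\mu t}g(t)$. Substituting back recovers the claimed identity.

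I do not expect any real obstacle here: the lemma is essentially a restatement of (\ref{substantialdefinition}) after a product-rule cancellation, and no use of the nontrivial identities (\ref{IdenttityOperator}) or (\ref{IdenttityOperator2}) is needed. The only point requiring minor care is matching indices correctly (the fractional order inside ${}_0I_t^{\gamma}$ is $\gamma$, not $1-\gamma$, so the definition of the Riemann--Liouville derivative must be applied with $\alpha=1-\gamma$), and assuming enough regularity of $g(t)$ so that the interchange of $\partial_t$ with the integral in $h'(t)$ is justified, which is granted by the standing regularity assumption in this section.
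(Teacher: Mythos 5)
Your proposal is correct and follows essentially the same route as the paper's own proof in \ref{appendix2}: expand $\bigl[\lambda+pU(x_0)+\tfrac{\partial}{\partial t}\bigr]$ applied to $e^{-(\lambda+pU(x_0))t}\,{}_0I_t^{\gamma}(\cdot)$ by the product rule, cancel the $\lambda+pU(x_0)$ terms, and identify the surviving $\tfrac{\partial}{\partial t}{}_0I_t^{\gamma}$ with ${}_0D_t^{1-\gamma}$ from its definition. Your observation that neither (\ref{IdenttityOperator}) nor (\ref{IdenttityOperator2}) is needed is also consistent with the paper.
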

See  \ref{appendix2} for the proof of Eq. (\ref{eq6}).
By Lemmas \ref{substantilemma1} and Eq. (\ref{IdenttityOperator2}), if $G_{x_0}(p,t)$ with respect to $t$ lies in $C^1[0,T]$,
then model (\ref{eq-mode2}) has the following equivalent form
\begin{eqnarray} \label{eq7}
&&e^{-\left(\lambda+pU(x_0)\right)t}\,{}_0^CD_t^{\gamma}\left(e^{\left(\lambda+pU(x_0)\right)t}G_{x_0}(p,t)\right)-\lambda^{\gamma}G_{x_0}(p,t)\nonumber\\
&&=K_{\gamma} \frac{\partial ^2}{\partial x_0^2}G_{x_0}(p,t)+f(x_0,p,t),\label{modemode2}
\end{eqnarray}
where $f(x_0,p,t)=-\lambda^{\gamma} e^{-pU(x_0)t}+\lambda e^{-\left(\lambda+pU(x_0)\right)t}{}_0I_t^{1-\gamma}(e^{\lambda t})$.

Let's briefly introduce the deriving process of Eq. (\ref{eq7}). One can firstly rewrite model  (\ref{eq-mode2})  as
\begin{eqnarray*}
&&\left(\lambda+pU(x_0)+\frac{\partial}{\partial t} \right)G_{x_0}(p,t)\nonumber\\
&&=D_t^{1-\gamma,\lambda}\left(\lambda^{\gamma}+K_{\gamma} \frac{\partial ^2}{\partial x_0^2}\right)G_{x_0}(p,t)
-\left[\lambda^{\gamma} D_t^{1-\gamma,\lambda}-\lambda\right]e^{-pU(x_0)t}.
\end{eqnarray*}
Note that
\begin{eqnarray*}
\left(\lambda+pU(x_0)+\frac{\partial}{\partial t}\right)G_{x_0}(p,t)=e^{-(\lambda+pU(x_0))t}\frac{\partial}{\partial t}\left(e^{(\lambda+pU(x_0))t}G_{x_0}(p,t)\right).
\end{eqnarray*}
Then using Lemma \ref{substantilemma1}, it follows that
\begin{eqnarray}
&&\frac{\partial}{\partial t}\left(e^{(\lambda+pU(x_0))t}G_{x_0}(p,t)\right)\\
&& ={}_0D_t^{1-\gamma}\left(e^{(\lambda+pU(x_0))t}\left(\lambda^{\gamma}
+K_{\gamma} \frac{\partial ^2}{\partial x_0^2}\right)G_{x_0}(p,t)\right)\nonumber \\
&& -e^{(\lambda+pU(x_0))t}\left(\lambda^{\gamma} D_t^{1-\gamma,\lambda}e^{-pU(x_0)t}-\lambda e^{-pU(x_0)t}\right).\label{eqderivativeq}
\end{eqnarray}
Acting  ${}_0I_t^{(1-\gamma)}$ on  both sides of (\ref{eqderivativeq}), and taking $v(t)$ in (\ref{IdenttityOperator2}) as $e^{(\lambda+pU(x_0))t}G_{x_0}(p,t)$, it yields
\begin{eqnarray}
&&e^{-\left(\lambda+pU(x_0)\right)t}\,{}_0^CD_t^{\gamma}\left(e^{\left(\lambda+pU(x_0)\right)t}G_{x_0}(p,t)\right)-\lambda^{\gamma}G_{x_0}(p,t)\nonumber\\
&&~=K_{\gamma} \frac{\partial ^2}{\partial x_0^2}G_{x_0}(p,t)+f(x_0,p,t)
\end{eqnarray}
with
\begin{eqnarray}
&&f(x_0,p,t)=-e^{-(\lambda+pU)t}{}_0I_t^{1-\gamma}\left[e^{(\lambda+pU)t}\left(\lambda^{\gamma} D_t^{1-\gamma,\lambda}e^{-pUt}
-\lambda e^{-pUt}\right)\right]\nonumber\\
&&~~~~~~~~~~~~~=-\lambda^{\gamma} e^{-pUt}+\lambda e^{-\left(\lambda+pU\right)t}{}_0I_t^{1-\gamma}(e^{\lambda t}).
\end{eqnarray}
This process can also be similarly inversed. One can develop the numerical scheme of (\ref {eq-mode2}) based on the form (\ref{modemode2}); since for the given initial and boundary conditions:
\begin{eqnarray}
G_{x_0}(p,0)=\phi(p,x_0),\quad G_a(p,t)=\psi_l(p,t),\quad G_b(p,t)=\psi_r(p,t),
\end{eqnarray}
one can let
\begin{eqnarray}
&&G_{x_0}(p,t)=W_{x_0}(p,t)+\phi(p,x_0) e^{-(\lambda+pU(x_0))t}\nonumber\\
&&~~~~~~~~~~~~~~+\frac{(x_0-a)}{(b-a)}\left[\psi_r(p,t)e^{(\lambda+pU(b))t}-\phi_(p,b)\right]e^{-(\lambda+pU(x_0))t}\nonumber\\
&&~~~~~~~~~~~~~~+\frac{(b-x_0)}{(b-a)}\left[\psi_l(p,t)e^{(\lambda+pU(a))t}-\phi(p,a)\right]e^{-(\lambda+pU(x_0))t}\label{huanyuan}
\end{eqnarray}
to obtain a model with respect to $W_{x_0}(p,t)$ satisfying $W_{x_0}(p,0)=W_a(p,t)=W_b(p,t)=0$. So, without loss of generality, we assume that $\phi(p,x_0)=\psi_l(p,t)=\psi_r(p,t)=0$.

Now, based on Eq. (\ref{eq7}), we consider the discretization of its time dependent operator.
Let $0=t_0<\cdots<t_n<\cdots<t_N=T$ with $t_n=n\tau$ be the  time partition of $[0,T]$,  and denote
\begin{eqnarray*}
\mathscr{S}^{1+\gamma}(\mathbb{R})=\left\{v\bigg|v\in L^1(\mathbb{R});
 \int_{\mathbb{R}}\left(1+|\omega|\right)^{1+\gamma}|\mathscr{F}[{v(t)}](\omega)|d\omega<\infty\right\}.
\end{eqnarray*}
Here $\mathscr{F}[{v}(t)](\omega)=\int_{-\infty}^{\infty}e^{-i\omega t}v(t)dt$ denotes the Fourier transform of $v(t)$.
In the following, we assume that $|pU|$ is bounded.

Note that for $v(0)=0$, one has ${}_0^CD_t^{\gamma}v(t)={}_0D_t^{\gamma}v(t)$ and  the  Gr\"unwald  approximation \cite{Meerschaert:13,Podlubny:99}:
\begin{eqnarray}
{}_0D_t^{\gamma}v(t_n)=\frac{1}{\tau^{\gamma}}\sum_{k=0}^ng_k^{(\gamma)}v(t_{n-k})+\mathcal{O}(\tau), {~~\rm if~} \tilde {v}(t)\in \mathscr{S}^{1+\gamma}(\mathbb{R}),
\end{eqnarray}
where $\sum_{k=0}^{\infty}g_k^{(\gamma)}z^k=(1-z)^{\gamma}$, and $\tilde {v}(t)$ is the zero extension of $v(t)$ for $t\le 0$.
By Lemma \ref {substantilemma1}, it holds that
\begin{eqnarray}
e^{-\left(\lambda+pU(x_0)\right)t}\,{}_0^CD_t^{\gamma}\left(e^{\left(\lambda+pU(x_0)\right)t}G_{x_0}(p,t)\right)=D_t^{\gamma,\lambda}G_{x_0}(p,t);
\end{eqnarray}
and one may desire:
\begin{eqnarray} \label{eq16}
&& D_t^{\gamma,\lambda}G_{x_0}(p,t)\bigg|_{t=t_n} \nonumber\\
&& = \frac{e^{-(\lambda+pU(x_0))t_n}}{\tau^{\gamma}}\sum_{k=0}^ng_k^{(\gamma)}e^{(\lambda+pU(x_0))t_{n-k}}G_{x_0}(p,t_{n-k})
+\mathcal{O}(\tau)\nonumber\\
&& =\frac{1}{\tau^{\gamma}}\sum_{k=0}^ng_k^{(\gamma,\lambda)} e^{-pU(x_0)k\tau}G_{x_0}(p,t_{n-k})+\mathcal{O}(\tau),\label{approximationeq}
\end{eqnarray}
 where $g_k^{(\gamma,\lambda)}=e^{-k\lambda\tau}g_k^{(\gamma)}$ satisfying $\sum_{k=0}^{\infty}g_k^{(\gamma,\lambda)}z^k
 =\left(1-ze^{-\lambda \tau}\right)^{\gamma}$.
 In fact, currently $D_t^{\gamma,\lambda}G_{x_0}(p,t)$  can be regarded as the tempered fractional derivative given in \cite{Sabzikar:15}
 with the tempered parameter $\lambda+pU(x_0)$. Therefore,  the proof of $(\ref{approximationeq})$
 is similar to \cite[Theorem 2.4]{Meerschaert:13} and  \cite[Theorem 5.1]{Sabzikar:15}.  And for the properties of the coefficient $g_k^{(\gamma,\lambda)}$, it is easy to check that
\begin{lemma}\label{ceofficienteq}
For $0<\gamma<1$, it follows that
\begin{eqnarray}
&&g_0^{(\gamma,\lambda)}=1,\quad g_k^{(\gamma,\lambda)}=e^{-\lambda \tau}\left(1-\frac{\gamma+1}{k}\right)g_{k-1}^{(\gamma,\lambda)}\quad k\ge 1,\\
&&g_k^{(\gamma,\lambda)}<0,\quad \sum_{k=0}^ng_k^{(\gamma,\lambda)}>\left(1-e^{-\lambda\tau}\right)^\gamma.
\end{eqnarray}
\end{lemma}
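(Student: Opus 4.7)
The proof is essentially an unpacking of the generating function identity $\sum_{k=0}^{\infty}g_k^{(\gamma,\lambda)}z^k=(1-ze^{-\lambda\tau})^{\gamma}$, together with the standard Grünwald recurrence, so the plan is short and computational rather than structural.

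First, I would establish the recurrence. The classical Grünwald coefficients satisfy $g_0^{(\gamma)}=1$ and $g_k^{(\gamma)}=\bigl(1-\tfrac{\gamma+1}{k}\bigr)g_{k-1}^{(\gamma)}$ for $k\ge1$; this is a direct consequence of the binomial expansion $(1-z)^{\gamma}=\sum_{k=0}^{\infty}(-1)^k\binom{\gamma}{k}z^k$. Since $g_k^{(\gamma,\lambda)}=e^{-k\lambda\tau}g_k^{(\gamma)}$, multiplying the Grünwald recurrence by $e^{-k\lambda\tau}=e^{-\lambda\tau}\cdot e^{-(k-1)\lambda\tau}$ yields exactly $g_k^{(\gamma,\lambda)}=e^{-\lambda\tau}\bigl(1-\tfrac{\gamma+1}{k}\bigr)g_{k-1}^{(\gamma,\lambda)}$, and $g_0^{(\gamma,\lambda)}=1$ is immediate.

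Next, the sign claim follows by induction via this recurrence. For $k=1$, the factor $1-(\gamma+1)/k=-\gamma<0$, so $g_1^{(\gamma,\lambda)}=-\gamma\,e^{-\lambda\tau}<0$. For $k\ge 2$, since $\gamma\in(0,1)$ we have $\gamma+1<2\le k$, hence $1-\tfrac{\gamma+1}{k}\in(0,1)$, so the recurrence preserves the (negative) sign of $g_{k-1}^{(\gamma,\lambda)}$.

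Finally, for the partial-sum bound, I would evaluate the generating function at $z=1$: since $|e^{-\lambda\tau}|\le 1$, the series $\sum_{k=0}^{\infty}g_k^{(\gamma,\lambda)}$ converges (absolute convergence on the boundary for $0<\gamma<1$ is classical) to $\bigl(1-e^{-\lambda\tau}\bigr)^{\gamma}$. Writing
\begin{equation*}
\sum_{k=0}^{n}g_k^{(\gamma,\lambda)}=\bigl(1-e^{-\lambda\tau}\bigr)^{\gamma}-\sum_{k=n+1}^{\infty}g_k^{(\gamma,\lambda)},
\end{equation*}
and invoking the sign result just proved, the tail is strictly negative, so $\sum_{k=0}^{n}g_k^{(\gamma,\lambda)}>(1-e^{-\lambda\tau})^{\gamma}$ as claimed. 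The only mild subtlety worth checking carefully is convergence of the generating function at $z=1$ (so that splitting the sum as above is legitimate); this is handled by the asymptotics $g_k^{(\gamma)}\sim -\gamma k^{-\gamma-1}/\Gamma(1-\gamma)$ together with the decay from $e^{-k\lambda\tau}$, but really the strict inequality only uses that $g_k^{(\gamma,\lambda)}<0$ for every $k\ge 1$.
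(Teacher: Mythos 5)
Your proof is correct, and it is the standard argument the authors have in mind: the paper states this lemma without proof (``it is easy to check''), and the expected verification is precisely the binomial recurrence for $g_k^{(\gamma)}$, the sign preservation under the factor $1-\frac{\gamma+1}{k}\in(0,1)$ for $k\ge 2$, and the evaluation of the generating function at $z=1$ combined with negativity of the tail. Your attention to convergence at $z=1$ (needed for the $\lambda=0$ case, where the right-hand side degenerates to $0$) is a worthwhile detail that the paper glosses over.
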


\section {Finite difference scheme and the stability and convergence analysis}\label{sec3}
In this section, we derive the finite difference scheme of Eq. (\ref{eq7}) and present the detailed stability and convergence analysis.
\subsection{Derivation of the difference scheme}
Let $a=x_{0,0}<\cdots x_{0,m}<\cdots<x_{0,M}=b$ with $x_{0,m}=mh,\,h=\frac{b-a}{M}$ be the space partition.
And denote $G_m^n$ as the numerical approximation of $G_{x_{0,m}}(p,t_n),\,\Omega:=(a,b)$. For $G_{x_0}(p,\cdot)\in C^4(\overline{\Omega})$, the second order central difference formula for the spatial derivative is given as
\begin{eqnarray}\label{spacediscrete2}
\frac{\partial ^2}{\partial x_0^2}G_{x_{0}}(p,t)\Big|_{(x_{0,m},t_n)}\approx\frac{G_{x_{0,m-1}}(p,t_n)-2G_{x_{0,m}}(p,t_n)+G_{x_{0,m+1}}(p,t_n)}{h^2}
\end{eqnarray}
with the local truncation error $\mathcal{O}(h^2)$. And from (\ref{approximationeq}), one has
\begin{eqnarray}
&&\Big[e^{-(\lambda+pU(x_{0,m}))t}\,{}_0^CD_t^{\gamma}\left(e^{(\lambda+pU(x_{0,m}))t}G_{x_{0,m}}(p,t)\right)
-\lambda^{\gamma}G_{x_{0,m}}(p,t)\Big]\Big|_{t=t_n}\nonumber\\
&&\approx \frac{1}{\tau^{\gamma}} \sum_{k=0}^{n}g_k^{(\gamma,\lambda)}e^{-pU(x_{0,m})k\tau}G_{x_{0,m}}(p,t_n-k\tau)
-e^{-\gamma \lambda \tau}\lambda^\gamma G_{x_{0,m}}(p,t_n)\nonumber\\
&&\approx\frac{1}{\tau^{\gamma}}\sum_{k=0}^n d_k^{(\gamma,\lambda)}e^{-pU(x_{0,m})k\tau}G_{x_{0,m}}(p,t_n-k\tau) \label{timediscrete1}
\end{eqnarray}
with the local truncation error $\mathcal{O}(\tau) $, where $e^{-\gamma\lambda\tau}=1-\gamma\lambda \tau+\mathcal{O}(\tau^2)$ has been used,
 and $d_0^{(\gamma,\lambda)}=1-e^{-\gamma \lambda \tau}(\lambda\tau)^\gamma$, $ d_k^{(\gamma,\lambda)}=g_k^{(\gamma,\lambda)}$ for $k\ge 1$.

Denoting
\begin{eqnarray*}
&&{\rm M_k}={\rm diag}\left(e^{-pU(x_{0,1})k\tau},\cdots, e^{-pU(x_{0,m})k\tau},\cdots,e^{-pU(x_{0,M-1})k\tau}\right),\\
&& {\rm H}=\frac{1}{h^2}\,{\rm tridiag}\left(1,-2,1\right),~~ G_h^n=\left(G_1^n,\cdots,G_m^n,\cdots,G_{M-1}^n\right)^T,\\
&& F^n=\left(f({x_{0,1}},p,t^n),\cdots,f({x_{0,m}},p,t^n),\cdots,f(x_{0,M-1},p,t^n)\right)^T,
\end{eqnarray*}
and omitting the truncation error terms, the difference scheme of (\ref{modemode2}) can be given as
\begin{eqnarray}\label{differencescheme2}
&&\sum_{k=0}^n \frac{d_k^{(\gamma,\lambda)}}{\tau^{\gamma}}{\rm M_k} G_h^{n-k}=K_{\gamma}\,{\rm H}G_h^n+F^n.
\end{eqnarray}

\subsection{Stability and convergence}
Let $\mathcal{V}=\left\{v^n\big|v^n=\left(v^n_0,\cdots,v^n_m,\cdots,v^n_M\right), v^n_0=v^n_M=0\right\}$ be the grid function on the mesh
$\{(t_n,x_{0,m})\big|\,0\le n\le N, 0\le m\le M\}$.
For any $v,w\in \mathcal{V}$, we introduce the following notations
\begin{eqnarray}
&&(v^n,w^n)_h=h\sum_{m=1}^{M-1}v^n_m \overline {w^n_m},\quad \left\|v^n\right\|_h=\sqrt{(v^n,v^n)_h}\,,\\
&&\left|v^n\right|_{h,1}=\sqrt{h\sum_{m=1}^{M}\delta_{x_0}v^n_{m-\frac{1}{2} }\,\overline{\delta_{x_0}v^n_{m-\frac{1}{2} }}}, ~~~
 \left\|v^n\right\|_{h,\infty}=\max_{1\le m\le M-1}\left|v^n_m\right|,
\end{eqnarray}
where  $\delta_{x_0} v^n_{m-\frac{1}{2}}= \frac{v^n_m-v^n_{m-1}}{h}$. Then similar to the proof  of \cite[Lemma 1.1.1]{Sun:12}, there holds
\begin{eqnarray} \label{normeq1}
\left\|v^n\right\|_h\le \frac{b-a}{\sqrt{6}}\left|v^n\right|_{h,1},\quad \left\|v^n\right\|_{h,\infty}\le \frac{b-a}{\sqrt{2}}\left|v^n\right|_{h,1}.
\end{eqnarray}
According to (\ref{normeq1}),   the following    norms \cite{Gao:16}
\begin{eqnarray}
\left\||v|\right\|_{0,h,1}=\left[\sum_{n=1}^N \left|v^n\right|_{h,1}^2\tau\right]^{1/2}, \quad \left\||v|\right\|_{0^\prime,h,\infty}
=\sum_{n=1}^N \left|v^n\right|_{h,\infty}\tau
\end{eqnarray}
will be used in the analysis.
\begin{lemma}\label{lemmeqne3}
For any $L\in \mathbb{N}$, let   $Z=\left(Z^0,Z^1,\cdots,Z^L\right)^T$. Then
\begin{eqnarray}
\sum_{n=0}^L\sum_{k=0}^nd_k^{(\gamma,\lambda)}Z^{n-k}Z^n \ge 0.
\end{eqnarray}
 Here $Z^T$ denotes the transpose of $Z$.
\end{lemma}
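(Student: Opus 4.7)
The plan is to recognize the double sum as a Hermitian-type form with a lower triangular Toeplitz structure and to establish positivity by a discrete Fourier (Parseval) argument, which reduces everything to a pointwise estimate on the symbol of the sequence $\{d_k^{(\gamma,\lambda)}\}$. After extending $Z^{n}=0$ for $n\notin\{0,\dots,L\}$ and putting $d_k^{(\gamma,\lambda)}=0$ for $k<0$, the target sum becomes $S=\sum_{n\in\mathbb{Z}}Z^{n}\bigl(d^{(\gamma,\lambda)}\!*\!Z\bigr)^{n}$, so with the (absolutely convergent) symbol $\hat d(\theta):=\sum_{k\ge 0}d_k^{(\gamma,\lambda)}e^{ik\theta}$ and the trigonometric polynomial $\hat Z(\theta):=\sum_{n}Z^{n}e^{in\theta}$, Parseval's identity yields
\begin{eqnarray*}
S=\frac{1}{2\pi}\int_{-\pi}^{\pi}\hat d(\theta)\,|\hat Z(\theta)|^{2}\,d\theta.
\end{eqnarray*}
Since $S\in\mathbb{R}$, $|\hat Z|^{2}\ge 0$ is even, and the real sequence $\{d_{k}^{(\gamma,\lambda)}\}$ gives $\hat d(-\theta)=\overline{\hat d(\theta)}$, the imaginary part of the integrand is odd and integrates to zero, so it suffices to prove ${\rm Re}\,\hat d(\theta)\ge 0$ for all $\theta$.

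For the symbol itself, I would invoke $d_{0}^{(\gamma,\lambda)}=1-e^{-\gamma\lambda\tau}(\lambda\tau)^{\gamma}$, $d_{k}^{(\gamma,\lambda)}=g_{k}^{(\gamma,\lambda)}$ for $k\ge 1$, together with the generating identity $\sum_{k\ge 0}g_{k}^{(\gamma,\lambda)}z^{k}=(1-ze^{-\lambda\tau})^{\gamma}$, to obtain
\begin{eqnarray*}
\hat d(\theta)=(1-re^{i\theta})^{\gamma}-e^{-\gamma\lambda\tau}(\lambda\tau)^{\gamma},\qquad r:=e^{-\lambda\tau}\in(0,1].
\end{eqnarray*}
Expanding via the absolutely convergent binomial series $(1-re^{i\theta})^{\gamma}=\sum_{k\ge 0}g_{k}^{(\gamma)}r^{k}e^{ik\theta}$ and taking real parts, then using the key sign property $g_{k}^{(\gamma)}\le 0$ for every $k\ge 1$ (same three-term recursion argument as in Lemma~\ref{ceofficienteq}) along with $\cos(k\theta)\le 1$, each term admits the bound $g_{k}^{(\gamma)}r^{k}\cos(k\theta)\ge g_{k}^{(\gamma)}r^{k}$; summing yields
\begin{eqnarray*}
{\rm Re}\bigl[(1-re^{i\theta})^{\gamma}\bigr]\ge \sum_{k\ge 0}g_{k}^{(\gamma)}r^{k}=(1-r)^{\gamma}=(1-e^{-\lambda\tau})^{\gamma}.
\end{eqnarray*}

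The proof then closes by verifying the elementary inequality $(1-e^{-\lambda\tau})^{\gamma}\ge e^{-\gamma\lambda\tau}(\lambda\tau)^{\gamma}$, equivalent to $e^{\lambda\tau}\ge 1+\lambda\tau$ and therefore a consequence of convexity of the exponential. Combining the last two displayed estimates gives ${\rm Re}\,\hat d(\theta)\ge 0$, whence $S\ge 0$. The step I expect to grind against is the pointwise bound on ${\rm Re}[(1-re^{i\theta})^{\gamma}]$: attacking it by differentiation in $\theta$ turns messy because one must track the competing variation of modulus and argument of a complex power, while the binomial expansion converts the geometric issue into a clean termwise comparison driven by a single structural fact, namely the negativity of the classical Gr\"unwald weights.
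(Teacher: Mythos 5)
Your proof is correct, but it takes a genuinely different route from the paper. The paper works entirely in finite dimensions: it forms the lower triangular Toeplitz matrix ${\rm B}$ with entries $d_{k-j}^{(\gamma,\lambda)}$, uses the three coefficient facts ($d_0^{(\gamma,\lambda)}>0$, $d_k^{(\gamma,\lambda)}\le 0$ for $k\ge1$, and $\sum_{k=0}^{n}d_k^{(\gamma,\lambda)}>0$) to conclude that ${\rm B}$ is row and column diagonally dominant with positive diagonal, and then applies the Gerschgorin theorem to the symmetrized matrix $\frac12({\rm B}+{\rm B}^T)$ to get $Z^T{\rm B}Z\ge0$. You instead pass to the symbol and prove ${\rm Re}\,\hat d(\theta)\ge0$ via the closed-form generating function $(1-e^{-\lambda\tau}e^{i\theta})^{\gamma}$, termwise comparison using $g_k^{(\gamma)}\le0$ and $\cos(k\theta)\le1$, and the same elementary inequality $e^{\lambda\tau}\ge1+\lambda\tau$ that the paper uses to establish $\sum_{k=0}^{n}d_k^{(\gamma,\lambda)}>0$. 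The two arguments rest on identical structural facts about the coefficients; the paper's version is more elementary (no infinite series evaluated on the unit circle, so no need to note that $\sum_k|g_k^{(\gamma)}|<\infty$ justifies the boundary evaluation when $\lambda=0$, i.e.\ $r=1$), while yours is the standard generating-function technique in the fractional-PDE literature and would survive in settings where diagonal dominance fails but the symbol still has nonnegative real part. One small caveat: both arguments, yours and the paper's, implicitly take $Z$ real (which is all that is needed, since the lemma is applied to $Z^n=\|G_h^n\|_h$); your Parseval identity as written requires this to identify $\sum_n(d*Z)^nZ^n$ with $\frac{1}{2\pi}\int\hat d\,|\hat Z|^2\,d\theta$.
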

\begin{proof}
By  the definition of $d_k^{(\gamma,\lambda)}$ and Lemma \ref{ceofficienteq}, one has
\begin{eqnarray}
&&d_0^{(\gamma,\lambda)}=1-e^{-\gamma \lambda \tau}(\lambda\tau)^\gamma=e^{-\gamma\lambda \tau}
\left((e^{\lambda\tau})^{\gamma}-(\lambda\tau)^\gamma\right)>0,\\
&& d^{(\gamma,\lambda)}_k\le 0 {~~\rm for~~} k\ge 1,\label{coeffixieq31}\\
&&\sum_{k=0}^{n}d^{(\gamma,\lambda)}_k> \left(1-e^{-\tau \lambda}\right)^{\gamma}-e^{-\gamma \lambda \tau}(\lambda\tau)^\gamma\nonumber\\
&&~~~~~~~~~~~~=e^{-\gamma\lambda \tau}\left( \left(e^{\lambda\tau}-1\right)^{\gamma}-   (\lambda\tau)^\gamma   \right)>0,
\end{eqnarray}
where the inequality $e^x>1+x, x>0$ has been used.
Define a lower triangular Toeplitz matrix ${\rm B}=(b_{k,j})_{L+1,L+1}$ with $b_{k,j}=d^{(\gamma,\lambda)}_{k-j}, j=0,\cdots, k$.
 Then matrix ${\rm B}$ is row and column  diagonally-dominant with $b_{k,k}>0$. Using the Gerschgorin theorem, one has
\begin{eqnarray*}
\sum_{n=0}^L\sum_{k=0}^nd_k^{(\gamma,\lambda)}Z^{n-k}Z^n=Z^T B Z=\frac{1}{2}Z^T\left({\rm B}+{\rm B}^T\right)Z \ge 0.
\end{eqnarray*}
The proof is complete.
\end{proof}

\begin{theorem}\label{stability2}
The discrete  scheme (\ref{differencescheme2}) is unconditionally stable, i.e., for any $1\le L\le N$, there holds
\begin{eqnarray}\label{stabilityeq22}
K_{\gamma} \sum_{n=1}^L\tau\left\|G_h^n\right\|_{h,\infty}^2\le \frac{\tau(b-a)^4}
{12K_\gamma}\sum_{n=1}^L\left\|F^n\right\|_h^2+\tau^{1-\gamma}(b-a)^2d_0^{(\gamma,\lambda)}\left\|G_h^0\right\|^2_{h}.
\end{eqnarray}
\end{theorem}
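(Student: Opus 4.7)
The plan is a standard discrete energy estimate on the scheme~(\ref{differencescheme2}): take the inner product $(\cdot,G_h^n)_h$ of both sides, pass to real parts, multiply by $\tau$, and sum $n=1,\ldots,L$. On the right-hand side the discrete Laplacian $\mathrm{H}$ contributes $-K_\gamma|G_h^n|_{h,1}^2$ after summation by parts (using $G_h^n=0$ on the boundary), while the source produces the bilinear pairing $(F^n,G_h^n)_h$ to be handled by Cauchy--Schwarz and Young's inequality. On the left-hand side the tempered time-convolution $\sum_{k=0}^n d_k^{(\gamma,\lambda)}\mathrm{Re}({\rm M}_k G_h^{n-k},G_h^n)_h$ is the key object, and I will control it via Lemma~\ref{lemmeqne3}. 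A final appeal to the Poincar\'e-type bounds~(\ref{normeq1}) converts the $|\cdot|_{h,1}$-estimate that emerges into the target $\|\cdot\|_{h,\infty}$-estimate~(\ref{stabilityeq22}).

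The main obstacle is the complex diagonal weight $\mathrm{M}_k$ sitting inside the time convolution, which prevents a direct application of Lemma~\ref{lemmeqne3}. The saving grace is that $U\geq 0$ and $\mathrm{Re}(p)>0$ imply every entry of $\mathrm{M}_k$ has modulus at most $1$, so $\|\mathrm{M}_k v\|_h\leq\|v\|_h$ and Cauchy--Schwarz gives $|({\rm M}_k G_h^{n-k},G_h^n)_h|\leq \|G_h^{n-k}\|_h\|G_h^n\|_h$. Combined with the sign information $d_k^{(\gamma,\lambda)}\leq 0$ for $k\geq 1$ (and the identity $\mathrm{M}_0=\mathrm{I}$ at $k=0$), this replaces each $\mathrm{Re}({\rm M}_k G_h^{n-k},G_h^n)_h$ by the scalar $\|G_h^{n-k}\|_h\|G_h^n\|_h$ with the correct direction of inequality. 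Setting $Z^n:=\|G_h^n\|_h$ and applying Lemma~\ref{lemmeqne3} to $(Z^0,\ldots,Z^L)$ shows the resulting scalar double sum is non-negative, and isolating the $n=0$ contribution $d_0^{(\gamma,\lambda)}\|G_h^0\|_h^2$ then yields
\begin{equation*}
\sum_{n=1}^L \tau\sum_{k=0}^n \frac{d_k^{(\gamma,\lambda)}}{\tau^{\gamma}}\mathrm{Re}({\rm M}_k G_h^{n-k},G_h^n)_h\;\geq\;-\tau^{1-\gamma}d_0^{(\gamma,\lambda)}\|G_h^0\|_h^2.
\end{equation*}

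Once that reduction is in place the rest is routine. For the source I use Cauchy--Schwarz together with $\|G_h^n\|_h\leq\tfrac{b-a}{\sqrt{6}}|G_h^n|_{h,1}$ from~(\ref{normeq1}), and Young's inequality with weight $K_\gamma/2$ absorbs a $\tfrac{K_\gamma}{2}|G_h^n|_{h,1}^2$ contribution into the left-hand diffusion term while leaving the desired $\tfrac{(b-a)^2}{12K_\gamma}\|F^n\|_h^2$ residue. Collecting everything gives $\tfrac{K_\gamma}{2}\sum_{n=1}^L\tau|G_h^n|_{h,1}^2\leq \tau^{1-\gamma}d_0^{(\gamma,\lambda)}\|G_h^0\|_h^2+\tfrac{(b-a)^2}{12K_\gamma}\sum_{n=1}^L\tau\|F^n\|_h^2$; multiplying by $(b-a)^2$ and invoking $\|G_h^n\|_{h,\infty}^2\leq\tfrac{(b-a)^2}{2}|G_h^n|_{h,1}^2$ from~(\ref{normeq1}) produces~(\ref{stabilityeq22}) exactly. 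No Gronwall loop is required, since the only unfavourable $|G_h^n|_{h,1}^2$ term that appears on the right is absorbed directly on the left.
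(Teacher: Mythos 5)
Your proposal is correct and follows essentially the same route as the paper's proof: inner product with $G_h^n$, the modulus bound $\|{\rm M}_kv\|_h\le\|v\|_h$ combined with the sign $d_k^{(\gamma,\lambda)}\le 0$ for $k\ge1$ to reduce the convolution to the scalar sequence $Z^n=\|G_h^n\|_h$, Lemma \ref{lemmeqne3} for positivity, Young's inequality with $\epsilon=\frac{(b-a)^2}{12K_\gamma}$, and the embeddings (\ref{normeq1}); the constants match (\ref{stabilityeq22}) exactly. Your explicit passage to real parts is a minor point of extra care the paper leaves implicit.
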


\begin{proof}
Taking an inner product of (\ref{differencescheme2}) with $G_h^n$, it yields that
\begin{eqnarray}\label{qnality}
\sum_{k=0}^n \frac{d_k^{(\gamma,\lambda)}}{\tau^{\gamma}}\left({\rm M_k} G_h^{n-k},G_h^n\right)_h
=K_{\gamma}\,\left({\rm H}G_h^n,G_h^n\right)_h+\left(F^n,G_h^n\right)_h.
\end{eqnarray}
Note that for ${\rm Re}(pU(x_0))\ge0$,  there holds ${\rm M_0}={\rm I}$ and
\begin{eqnarray*}
\left({\rm M_k} G_h^{n-k},G_h^n\right)_h\le \left\|{\rm M_k}G_h^{n-k}\right\|_h\left\|G_h^n\right\|_h\le \left\|G_h^{n-k}\right\|_h
\left\|G_h^n\right\|_h~~k\ge 1.
\end{eqnarray*}
Putting the above estimate into (\ref{qnality}), using (\ref{coeffixieq31}),
$ \left({\rm H}G_h^n,G_h^n\right)_h=-\left|G_h^n\right|^2_{h,1}$, and Young's inequality, one has
\begin{eqnarray}\label{lemmeqn22}
~~\sum_{k=0}^n \frac{d_k^{(\gamma,\lambda)}}{\tau^{\gamma}}\left\|G_h^{n-k}\right\|_h\left\|G_h^n\right\|_h+ K_{\gamma}\,\left|G_h^n\right|_{h,1}^2
\le {\epsilon}\left\|F^n\right\|_h^2+\frac{1}{4\epsilon}\left\|G_h^n\right\|^2_h.
\end{eqnarray}
Summing up (\ref{lemmeqn22}) for $n$ from $1$ to $L$, then adding $\tau^{1-\gamma}d_0^{(\gamma,s)}\left\|G_h^0\right\|_h^2$
 on both sides of  the obtained result and  using (\ref{normeq1}), one has
\begin{eqnarray}
&&{\tau^{1-\gamma}}\sum_{n=0}^L\sum_{k=0}^n {d_k^{(\gamma,\lambda)}}\left\|G_h^{n-k}\right\|_h\left\|G_h^n\right\|_h
+K_{\gamma} \sum_{n=1}^L\tau \left|G_h^n\right|_{h,1}^2\nonumber\\
&&\le \tau\sum_{n=1}^L\left( {\epsilon}\left\|F^n\right\|_h^2+\frac{1}{4\epsilon}\left\|G_h^n\right\|^2_h\right)
+\tau^{1-\gamma}d_0^{(\gamma,\lambda)}\left\|G_h^0\right\|^2_{h}\nonumber\\
&&\le \tau\sum_{n=1}^L\left( {\epsilon}\left\|F^n\right\|_h^2+\frac{1}{4\epsilon}\frac{(b-a)^2}{6}
\left|G_h^n\right|_{h,1}^2\right)+\tau^{1-\gamma}d_0^{(\gamma,\lambda)}\left\|G_h^0\right\|^2_{h}.
\label{eqneeee33}
\end{eqnarray}
Letting $Z=\left(\left\|G_h^0\right\|_h,\left\|G_h^1 \right\|_h,\cdots,\left\|G_h^L\right\|_h\right)^T$ and by Lemma \ref{lemmeqne3}, it holds that
\begin{eqnarray*}
\sum_{n=0}^L\sum_{k=0}^n {d_k^{(\gamma,\lambda)}}\left\|G_h^{n-k}\right\|_h\left\|G_h^n\right\|_h \ge 0.
\end{eqnarray*}
Putting the above result into (\ref{eqneeee33}) and choosing $\epsilon=\frac{(b-a)^2}{12 K_{\gamma}}$, one has
\begin{eqnarray}\label{normeqeuate}
K_{\gamma} \sum_{n=1}^L\tau\left|G_h^n\right|_{h,1}^2\le \frac{\tau(b-a)^2}{6K_\gamma}
\sum_{n=1}^L\left\|F^n\right\|_h^2+2\tau^{1-\gamma}d_0^{(\gamma,\lambda)}\left\|G_h^0\right\|^2_{h}.
\end{eqnarray}
Then  (\ref{stabilityeq22}) follows after using $\left\|G_h^n\right\|_{h,\infty}\le \frac{b-a}{\sqrt{2}}\left|G_h^n\right|_{h,1}$.
\end{proof}

\begin{theorem}
Let $G_{x_0}(p,t)$ be the exact solution of the  problem (\ref{modemode2}) and denote
\begin{eqnarray}
 \widetilde{G^n}=\left(\widetilde{G_1^n},\cdots, \widetilde{G_m^n},\cdots,\widetilde{G_{M-1}^n}\right)^T,\quad E_h^n=\widetilde{G^n}-G_h^n,\nonumber
 \end{eqnarray}
 where $\widetilde{G_m^n}:=G_{x_{0,m}}(p,t_n)$. Then
\begin{eqnarray}\label{eq34}
  \left\||E_h|\right\|_{0^\prime,h,\infty}<c_1\left(\tau+h^2\right),\quad \left\||E_h|\right\|_{0,h,1}<c_2\left(\tau+h^2\right),
\end{eqnarray}
where $c_1$ and $c_2$ are constants independent of $\tau$ and $h$, and their values can be simply got from Eqs. (38) and (39).
\end{theorem}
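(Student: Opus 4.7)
The plan is to reduce the convergence analysis to the stability analysis already proved in Theorem \ref{stability2}, by deriving an error equation driven by the consistency residual and then invoking the stability estimate with this residual in place of $F^n$.

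First I would evaluate the equivalent model (\ref{modemode2}) at the grid points $(x_{0,m},t_n)$ and replace the time operator by the shifted Gr\"unwald sum (\ref{timediscrete1}) and the spatial operator by the central difference (\ref{spacediscrete2}). By the truncation estimates already recorded in Section \ref{sec2} (namely $\mathcal{O}(\tau)$ from (\ref{approximationeq}) together with $e^{-\gamma\lambda\tau}=1-\gamma\lambda\tau+\mathcal{O}(\tau^2)$, and $\mathcal{O}(h^2)$ from (\ref{spacediscrete2})), the exact values $\widetilde{G_m^n}$ satisfy
\begin{equation*}
\sum_{k=0}^n \frac{d_k^{(\gamma,\lambda)}}{\tau^{\gamma}}\,\mathrm{M}_k\,\widetilde{G^{n-k}} = K_{\gamma}\,\mathrm{H}\,\widetilde{G^n} + F^n + R^n,
\end{equation*}
where the residual $R^n$ satisfies $\|R^n\|_h \le C\sqrt{b-a}\,(\tau+h^2)$ uniformly in $n$, with $C$ depending only on bounds of $G_{x_0}(p,\cdot)$ in $C^4(\overline\Omega)$ and of its time Fourier transform in $\mathscr{S}^{1+\gamma}(\mathbb{R})$. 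Subtracting the scheme (\ref{differencescheme2}) gives the error equation
\begin{equation*}
\sum_{k=0}^n \frac{d_k^{(\gamma,\lambda)}}{\tau^{\gamma}}\,\mathrm{M}_k\,E_h^{n-k} = K_{\gamma}\,\mathrm{H}\,E_h^n + R^n,\qquad E_h^0 = 0.
\end{equation*}

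Next I would apply the stability argument of Theorem \ref{stability2} verbatim to this error equation. Since $E_h^0=0$, the initial-data term drops out, and the intermediate inequality (\ref{normeqeuate}) yields
\begin{equation*}
K_{\gamma}\sum_{n=1}^L \tau\,|E_h^n|_{h,1}^2 \le \frac{\tau(b-a)^2}{6K_\gamma}\sum_{n=1}^L \|R^n\|_h^2 \le \frac{T(b-a)^3}{6K_\gamma}\,C^2(\tau+h^2)^2,
\end{equation*}
which is exactly the second bound in (\ref{eq34}) with an explicit constant $c_2$. For the first bound I combine the stronger infinity-in-space estimate (\ref{stabilityeq22}), again with zero initial error, with the discrete Cauchy--Schwarz inequality
\begin{equation*}
\||E_h|\|_{0^\prime,h,\infty} = \sum_{n=1}^N \tau\,|E_h^n|_{h,\infty} \le \sqrt{T}\left(\sum_{n=1}^N \tau\,\|E_h^n\|_{h,\infty}^2\right)^{1/2},
\end{equation*}
so that $\||E_h|\|_{0^\prime,h,\infty} \le c_1(\tau+h^2)$, with $c_1$ traceable through the same chain of inequalities.

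The only nontrivial step is the consistency bound $\|R^n\|_h\le C\sqrt{b-a}(\tau+h^2)$. Writing $R^n$ as the sum of the time residual and the space residual, the latter is immediate from Taylor expansion under $G_{x_0}(p,\cdot)\in C^4(\overline{\Omega})$; the former is the place to be careful, because the substantial derivative is a time-space coupled operator. The identity (\ref{eq6}) of Lemma \ref{substantilemma1} reduces the time residual to the Gr\"unwald residual for ${}_0^CD_t^\gamma$ applied to $e^{(\lambda+pU(x_{0,m}))t}G_{x_{0,m}}(p,t)$, and under the regularity $\widetilde v\in \mathscr{S}^{1+\gamma}(\mathbb{R})$ the $\mathcal{O}(\tau)$ bound follows from \cite[Thm.~2.4]{Meerschaert:13} and \cite[Thm.~5.1]{Sabzikar:15}, with a constant uniform in $m$ because $|pU|$ is assumed bounded. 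Once this pointwise bound is in hand, the discrete $\|\cdot\|_h$ norm introduces the factor $\sqrt{b-a}$ used above, and the rest of the argument is a direct appeal to the already-established stability result.
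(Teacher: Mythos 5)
Your proposal is correct and follows essentially the same route as the paper: derive the error equation from the consistency estimates (\ref{timediscrete1}) and (\ref{spacediscrete2}), note $E_h^0=0$, apply the stability bounds (\ref{stabilityeq22}) and (\ref{normeqeuate}) with the residual playing the role of $F^n$, and finish the $\left\|\cdot\right\|_{0^\prime,h,\infty}$ estimate via Cauchy--Schwarz. Your additional care in justifying the time-direction consistency bound (via Lemma \ref{substantilemma1} and the boundedness of $|pU|$) is a point the paper passes over by simply asserting $R_m^n=c_3(\tau+h^2)$, but it does not change the structure of the argument.
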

\begin{proof}
 By (\ref{timediscrete1}), (\ref{spacediscrete2}) and (\ref{differencescheme2}), the error equation can be given as
 \begin{eqnarray}\label{errorequation2}
\sum_{k=0}^n \frac{d_k^{(\gamma,\lambda)}}{\tau^{\gamma}}{\rm M_k} E_h^{n-k}=K_{\gamma}\,{\rm H}E_h^n+R_h^n,
\end{eqnarray}
where $E_h^0=0,\, R_h^n=\left(R_1^{n},\quad \cdots,R_m^n,\cdots,R_{M-1}^n\right)$,
 with $R_m^n=c_3(\tau+h^2), 1\le n\le N, 1\le m\le M-1$ being introduced in  (\ref{spacediscrete2}) and (\ref{timediscrete1}).
The application of Theorem \ref{stability2} to (\ref{errorequation2}) produces
\begin{eqnarray}
K_{\gamma} \sum_{n=1}^N\tau\left\|E_h^n\right\|_{h,\infty}^2\le \frac{\tau(b-a)^4}{12K_\gamma}
\sum_{n=1}^N\left\|R_h^n\right\|_h^2\le c_3^2T\frac{(b-a)^5}{12K_\gamma}(\tau+h^2)^2.
\end{eqnarray}
By the Cauchy-Schwarz inequality, it holds that
\begin{eqnarray} \label{eq38}
&& \left( \sum_{n=1}^N\tau\left\|E_h^n\right\|_{h,\infty}\right)^2\le\left(\sum_{n=1}^N\tau\right)\left(
 \sum_{n=1}^N\tau\left\|E_h^n\right\|_{h,\infty}^2\right)\nonumber\\
&&~~\le c_3^2T^2\frac{(b-a)^5}{12K_\gamma^2}(\tau+h^2)^2.\label{errorestimate1}
\end{eqnarray}
And from (\ref{normeqeuate}), it is easy to obtain
\begin{eqnarray} \label{eq39}
\left\||E_h|\right\|^2_{0,h,1}\le  c_3^2T\frac{(b-a)^3}{6K_\gamma^2}(\tau+h^2)^2.\label{errorestimate2}
\end{eqnarray}
The proof is complete.
\end{proof}
\section{Finite element scheme and the stability and convergence analysis}\label{sec4}
In this section, we further discuss the finite element approximation for (\ref{eq7}), including the corresponding stability and convergence analysis.
\subsection{Derivation of the finite element approximation}
As usual, we   seek the weak solution $G_{x_0}(p,\cdot)\in H^1_0(\Omega)$ of the model (\ref {modemode2})
 such that for any $\eta\in H_0^1(\Omega)$, there holds
\begin{eqnarray}
\left(D_t^{\gamma,\lambda}G_{x_0}(p,t),\eta\right)-\left(\lambda^\gamma G_{x_0}(p,t),\eta\right)
+K_\gamma\left(\frac{\partial G_{x_0}(p,t)}{\partial x_0},\frac{\partial \eta}{\partial x_0}\right)=\left(f,\eta\right),\label{weakformulae}
\end{eqnarray}
where  $\left(v,w\right)=\int_{\Omega}v\overline{w}dx_0$ denotes the complex inner product in $L_2(\Omega)$.
Let $a=x_{0,1}<\cdots <x_{0,m}<\cdots<x_{0,M}=b $ be a partition of $\Omega$ with $x_{0,m}=mh, h=\frac{(b-a)}{M}$.
 Then we define the finite element space:
\begin{eqnarray}
S_h:=\left\{v\in H_0^1(\Omega)\cap C(\overline {\Omega}): v\big|_{I_m}\in P_l(I_m)\right\},
\end{eqnarray}
where $P_l(I_m)$ denotes the space of polynomials of degree no greater than $l$ ($l\in\mathbb{N}^+$) on $I_m=(x_{0,m-1},x_{0,m})$.
Combining with (\ref{timediscrete1}),  the fully discrete finite element scheme can be described as: for $n=1,2,\cdots,N$, find $G_h^n\in S_h$ such that
\begin{eqnarray}\label{finteelement}
\left(\frac{1}{\tau^{\gamma}}\sum_{k=0}^n d_k^{(\gamma,\lambda)}e^{-pUk\tau}G_h^{n-k}, \,\eta\right)
+K_{\gamma}\left(\frac{\partial G_h^n}{\partial x_0}, \frac{\partial \eta}{\partial x_0}\right)=\left(f^n,\,\eta\right)~~ \forall\eta\in S_h
\end{eqnarray}
with $G_h^0=R_h\phi(p,x_0)=0\in S_h$, where $f^n=f(x_0,p,t_n)$.
\subsection{Stability and convergence}
For $v(x_0,t)$  defined on  $\overline{\Omega}\times [0,T]$,  we introduce the following notations:
\begin{eqnarray}
\left\|v\right\|_0=\sqrt{\left(v,v\right)},\quad\left|v\right|_1=\sqrt{\left(\frac{\partial v}{\partial{x_0}},\frac{\partial v}{\partial{x_0}}\right)},\quad \left\|v\right\|_{\infty,q}=\sup_{0\le t\le T}\left||v(\cdot,t)\right\|_q,
\end{eqnarray}
where $\left\|v\right\|_q$ denotes the norm in Sobolev space $H^q(\Omega) \,(H^0(\Omega):=L_2(\Omega))$. And for $v(a,t)=v(b,t)=0$,  it holds that
\begin{eqnarray}
\left\|v\right\|_0\le \frac{1}{\sqrt{\lambda_1}}\left|v\right|_1=\frac{b-a}{\pi}\left|v\right|_1,\label{normequality2}
\end{eqnarray}
where $\lambda_1$ denotes the smallest eigenvalue of operator $-\frac{\partial^2}{\partial {x_0}^2}$.
Then with the time partition $t_n, 0\le n\le N$ and $v^n:=v(\cdot,t_n)$, the following    norm \cite{Ervin:07}
\begin{eqnarray}
\left\||v|\right\|_{0,h,1}=\left[\sum_{n=1}^N\left|v^n\right|_1^2\tau\right]^{1/2}
\end{eqnarray}
will be used in the analysis.

For every $t$, define the Ritz projection  $R_h: H_0^1(\Omega)\to S_h$ by the orthogonal relation
\begin{eqnarray}
\left(\frac{\partial(R_hG_{x_0}(p,t))}{\partial x_0},\frac{\partial \eta}{\partial x_0}\right)=\left(\frac{\partial G_{x_0}(p,t)}{\partial x_0},\frac{\partial \eta}{\partial x_0}\right)\quad \forall \eta\in S_h.
\end{eqnarray}
 Then for $ G_{x_0}(p,t)\in H_0^1(\Omega)\cap H^q(\Omega)$, we have the well known approximation property \cite{Thomee:04}:
\begin{eqnarray}\label{approximationeq1}
\left\|\rho \right\|_0+h\left|\rho\right|_1\le c_4h^{\min\{l+1,q\}}\left\|G_{x_0}(p,t)\right\|_{q},
\end{eqnarray}
where $\rho=G_{x_0}(p,t)-R_hG_{x_0}(p,t)$.

\begin{theorem}
The finite element scheme (\ref{finteelement}) is unconditionally stable. And the approximate solution $G_h^n$ satisfies the following error estimate:
\begin{eqnarray}
\left\||E|\right\|_{0,h,1} \le c_5\left(\tau+h^{\min\{l,\,q-1\}}\right),
\end{eqnarray}
where $E=\left(G_{x_0}(p,t_1)-G_h^1,\cdots,G_{x_0}(p,t_n)-G_h^n,\cdots,G_{x_0}(p,t_N)-G_h^N\right)$ with $G_{x_0}(p,t_n)$
satisfying (\ref{weakformulae}), and  $c_5$ is constant independent of $ \tau$ and $h$, which can be obtained from Eq. (\ref{eq61}).
\end{theorem}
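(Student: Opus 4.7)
The plan is to mirror, in the finite-element setting, the energy argument used in the finite-difference analysis of Section~\ref{sec3}, with the Ritz projection replacing the nodal comparison and the Poincar\'e-type bound~(\ref{normequality2}) playing the role of~(\ref{normeq1}). Unconditional stability of~(\ref{finteelement}) would be obtained by taking $\eta=G_h^n$, using $\mathrm{Re}(pU(x_0))\ge0$ to get $|(e^{-pU k\tau}G_h^{n-k},G_h^n)|\le \|G_h^{n-k}\|_0\|G_h^n\|_0$, applying Lemma~\ref{lemmeqne3} with $Z^n=\|G_h^n\|_0$ after summation on $n$ to discard the non-negative time contribution, and finally closing the estimate by Young's inequality together with~(\ref{normequality2}), exactly as in Theorem~\ref{stability2}.

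For the error estimate I would split
\[
G_{x_0}(p,t_n)-G_h^n=\rho^n+\theta^n,\quad \rho^n:=G_{x_0}(p,t_n)-R_h G_{x_0}(p,t_n),\quad \theta^n:=R_h G_{x_0}(p,t_n)-G_h^n,
\]
subtract~(\ref{finteelement}) from the discrete weak equation evaluated at the exact solution (which introduces a consistency residual $r^n$ of order $\mathcal{O}(\tau)$ by~(\ref{approximationeq})), and exploit the orthogonality of $R_h$ to cancel the gradient contribution from $\rho^n$. Writing $\widetilde D_\tau^{\gamma,\lambda}v^n:=\tau^{-\gamma}\sum_{k=0}^n d_k^{(\gamma,\lambda)}e^{-pU k\tau}v^{n-k}$ for the discrete operator of~(\ref{approximationeq}), this leaves
\[
\bigl(\widetilde D_\tau^{\gamma,\lambda}\theta^n,\eta\bigr)+K_{\gamma}\Bigl(\tfrac{\partial\theta^n}{\partial x_0},\tfrac{\partial\eta}{\partial x_0}\Bigr)=\bigl(r^n-\widetilde D_\tau^{\gamma,\lambda}\rho^n,\,\eta\bigr)\qquad\forall\eta\in S_h.
\]
Testing with $\eta=\theta^n$, bounding the right-hand side via Cauchy--Schwarz and Young's inequality, summing on $n=1,\dots,N$ and invoking Lemma~\ref{lemmeqne3} (using $\theta^0=0$) to drop the non-negative time sum on the left, and absorbing the $\|\theta^n\|_0^2$ term through~(\ref{normequality2}), should produce
\[
K_\gamma\||\theta|\|_{0,h,1}^{2}\le c\sum_{n=1}^{N}\tau\bigl(\|r^n\|_0^2+\|\widetilde D_\tau^{\gamma,\lambda}\rho^n\|_0^2\bigr).
\]

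The conclusion then follows from the triangle inequality $\||E|\|_{0,h,1}\le\||\theta|\|_{0,h,1}+\||\rho|\|_{0,h,1}$ combined with $\||\rho|\|_{0,h,1}\le c\,h^{\min\{l,q-1\}}\sup_{t}\|G_{x_0}(p,t)\|_q$, which is immediate from~(\ref{approximationeq1}). The main obstacle will be controlling $\widetilde D_\tau^{\gamma,\lambda}\rho^n$ in $L_2(\Omega)$: because of the formal $\tau^{-\gamma}$ prefactor, a term-by-term estimate using only $\|\rho^n\|_0\le c\,h^{\min\{l+1,q\}}$ is insufficient. The cleaner route, permissible under the same temporal regularity of $G_{x_0}(p,\cdot)$ already required for~(\ref{approximationeq}), is to use the consistency relation $\widetilde D_\tau^{\gamma,\lambda}\rho^n=\bigl(D_t^{\gamma,\lambda}-\lambda^{\gamma}\bigr)\rho(\cdot,t_n)+\mathcal{O}(\tau)$ and then apply~(\ref{approximationeq1}) to $D_t^{\gamma,\lambda}G_{x_0}(p,\cdot)$, which is at least as spatially regular as $G_{x_0}(p,\cdot)$ itself; this yields the expected $\mathcal{O}(h^{\min\{l+1,q\}})$ bound. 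All remaining steps are direct analogs of the arguments already carried out for the finite-difference scheme.
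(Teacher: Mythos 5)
Your overall skeleton coincides with the paper's proof: the stability part (test with $G_h^n$, use $\mathrm{Re}(pU)\ge 0$, Lemma \ref{lemmeqne3}, Young's inequality and (\ref{normequality2})) and the decomposition $G_{x_0}(p,t_n)-G_h^n=\rho^n+\theta^n$ via the Ritz projection are exactly what the paper does. You also correctly single out the one non-routine step, namely bounding $r_2^n=-\tau^{-\gamma}\sum_{k=0}^n d_k^{(\gamma,\lambda)}e^{-pUk\tau}\rho^{n-k}$ in $L_2(\Omega)$. But the route you propose for that step has a genuine gap. First, the identity $\widetilde D_\tau^{\gamma,\lambda}\rho^n=\bigl(D_t^{\gamma,\lambda}-\lambda^{\gamma}\bigr)\rho(\cdot,t_n)+\mathcal{O}(\tau)$ is a Gr\"unwald consistency statement applied to the $h$-dependent function $\rho$; its $\mathcal{O}(\tau)$ constant is controlled by a temporal Fourier-type norm of $\rho$, and you would still need to show this constant is bounded (indeed of size $\mathcal{O}(h^{\min\{l+1,q\}})$) uniformly in $h$, otherwise the $\mathcal{O}(\tau)$ remainder ruins the spatial rate. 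Second, and more seriously, ``apply (\ref{approximationeq1}) to $D_t^{\gamma,\lambda}G_{x_0}(p,\cdot)$'' bounds $\|(I-R_h)D_t^{\gamma,\lambda}G\|_0$, whereas what is needed is $\|D_t^{\gamma,\lambda}(I-R_h)G\|_0$. These differ: $D_t^{\gamma,\lambda}$ carries the spatially varying weight $e^{(\lambda+pU(x_0))t}$, and the Ritz projection does not commute with multiplication by that weight, so $R_h$ and $D_t^{\gamma,\lambda}$ do not commute.

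The paper avoids both issues by a purely discrete argument (see (\ref{errorfem1})--(\ref{errorfem2})): it writes the untempered coefficients as differences $Q_k^{\gamma}-Q_{k-1}^{\gamma}$ of the partial sums $Q_k^{\gamma}=\sum_{j=0}^k g_j^{(\gamma,0)}$, performs an Abel summation (using $\rho^0=0$) to recast the sum as $\sum_{k=0}^{n-1}Q_{n-1-k}^{\gamma}\int_{t_k}^{t_{k+1}}\frac{\partial}{\partial t}\bigl(e^{(\lambda+pU)t}\rho\bigr)dt$, and then uses $\tau^{1-\gamma}\sum_{k=0}^{n-1}Q_{n-1-k}^{\gamma}\le c\,T^{1-\gamma}$ via Stirling's formula. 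This reduces everything to $\|\rho\|_{\infty,0}$ and $\|\partial\rho/\partial t\|_{\infty,0}$, both of which are $\mathcal{O}(h^{\min\{l+1,q\}})$ by (\ref{approximationeq1}), since $R_h$ does commute with $\partial/\partial t$. If you wish to keep a continuous-in-time argument, the correct object to estimate is $D_t^{\gamma,\lambda}\rho=e^{-(\lambda+pU)t}{}_0I_t^{1-\gamma}\bigl(e^{(\lambda+pU)t}((\lambda+pU)\rho+\partial_t\rho)\bigr)$ directly, which again needs only $\rho$ and $\partial_t\rho$ in $L^{\infty}(L_2)$ --- but the consistency-constant issue for the Gr\"unwald step would still remain to be addressed.
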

\begin{proof}
Taking $\eta=G^n_h$ in (\ref{finteelement}), it yields that
\begin{eqnarray}
\frac{1}{\tau^{\gamma}}\sum_{k=0}^n d_k^{(\gamma,\lambda)}\left(e^{-pUk\tau}G_h^{n-k}, \,G_h^n\right)
+K_{\gamma}\left(\frac{\partial G_h^n}{\partial x_0}, \frac{\partial G_h^n}{\partial x_0}\right)=\left(f^n,\,G_h^n\right).\label{stabilityeq}
\end{eqnarray}
Note that for ${\rm Re}\left(pU(x_0)\right)\ge 0$,
\begin{eqnarray}
\left(e^{-pUk\tau}G_h^{n-k}, \,G_h^n\right)\le \left\|G_h^{n-k}\right\|_0\left\|G_h^n\right\|_0.
\end{eqnarray}
Therefore,
\begin{eqnarray}
\frac{1}{\tau^{\gamma}}\sum_{k=0}^n d_k^{(\gamma,\lambda)}\left\|G_h^{n-k}\right\|_0\left\|G_h^n\right\|_0
+K_{\gamma}\left|G_h^n\right|_1^2\le {\epsilon}\left\|f^n\right\|_0^2+\frac{1}{4\epsilon}\left\|G_h^n\right\|_0^2. \label{finielemeteq3}
\end{eqnarray}
Summing up (\ref{finielemeteq3}) for $n$ from $1$ to $L$, then adding $\tau^{1-\gamma}d_0^{(\gamma,s)}
\left\|G_h^0\right\|_0^2$ on both sides of  the obtained result, and  using (\ref{normequality2}) and  Lemma \ref{lemmeqne3}, one has
 \begin{eqnarray}
 K_{\gamma}\sum_{n=1}^L\tau\left|G_h^n\right|_1^2\le \tau\sum_{n=1}^L\left(\epsilon\left\|f^n\right\|_0^2
 +\frac{(b-a)^2}{4\epsilon\pi^2}\left|G_h^n\right|_1^2\right)
 +\tau^{1-\gamma}d_0^{(\gamma,\lambda)}\left\|G_h^0\right\|_0^2.
 \end{eqnarray}
 Choosing $\epsilon=\frac{(b-a)^2}{2\pi^2K_\gamma}$, it yields that
 \begin{eqnarray}
K_{\gamma} \sum_{n=1}^L\tau\left|G_h^n\right|_{1}^2\le \frac{\tau(b-a)^2}{\pi^2 K_\gamma}
\sum_{n=1}^L\left\|f^n\right\|_0^2+2\tau^{1-\gamma}d_0^{(\gamma,\lambda)}\left\|G_h^0\right\|_0^2.\label{stabilityeq2}
\end{eqnarray}
So, the proof of the unconditional stability of the finite element scheme (\ref{finteelement}) is completed.

Now, we derive the error estimate. First, we decompose the error into two terms, i.e., $ G_{x_0}(p,t_n)-G_h^n=\rho^n+\theta^n$
with $\rho^n=G_{x_0}(p,t_n)-R_hG_{x_0}(p,t_n)$ and $\theta^n=R_hG_{x_0}(p,t_n)-G_h^n$. Then for any $\eta\in S_h$, it follows that
\begin{eqnarray}
\left(\frac{1}{\tau^{\gamma}}\sum_{k=0}^n d_k^{(\gamma,\lambda)}e^{-pUk\tau}\theta^{n-k}, \,\eta\right)
+K_{\gamma}\left(\frac{\partial \theta^n}{\partial x_0}, \frac{\partial \eta}{\partial {x_0}} \right)=\left(r_1^n+r^n_2,\,\eta\right),\label{errorequation}
\end{eqnarray}
where $r_1^n$ satisfying $\left\|r_1^n\right\|_0\le c_6\tau$ with $1\le n\le N$ is the time directional error term introduced in (\ref{timediscrete1}), and the space projection error 
\begin{eqnarray}
&&\left\|r_2^n\right\|_0=\Big\|-\frac{1}{\tau^{\gamma}}\sum_{k=0}^n d_k^{(\gamma,\lambda)}e^{-pUk\tau}\rho^{n-k}\Big\|_0\nonumber\\
&&\le\frac{1}{\tau^\gamma}\Big\| e^{-(\lambda+pU)t_n}\sum_{k=0}^n(Q^{\gamma}_k-Q^{\gamma}_{k-1})
e^{(\lambda+pU)t_{n-k}}\rho^{n-k}\Big\|_0+e^{-\gamma \lambda h}\lambda^{\gamma}\left\|\rho^n\right\|_0\nonumber\\
&&\le\frac{1}{\tau^\gamma}\Big\|e^{-(\lambda+pU)t_n}\sum_{k=0}^{n-1}Q_{n-1-k}^{\gamma}\int_{t_k}^{t_{k+1}}\frac{\partial}{\partial t}
\left(e^{(\lambda+pU)t}\rho \right)dt\Big\|_0+e^{-\gamma \lambda h}\lambda^{\gamma}\left\|\rho^n\right\|_0\nonumber\\
&&\le \frac{1}{\tau^\gamma}\sum_{k=0}^{n-1}Q_{n-1-k}^\gamma\int_{t_k}^{t_{k+1}}
\left\|\left(\lambda+pU+\frac{\partial}{\partial t}\right)\rho\right\|_0dt+e^{-\gamma \lambda h}\lambda^{\gamma}\left\|\rho^n\right\|_0\nonumber\\
&&\le\sum_{k=0}^{n-1}\frac{Q_{n-1-k}^\gamma}{ \tau^{\gamma-1}}\left(\left(\lambda+p\sup_{x_0\in \Omega} U(x_0)\right)\left\|\rho\right\|_{\infty,0}+\left\|\frac{\partial\rho}{\partial t}\right\|_{\infty,0}\right)
+\lambda^\gamma \left\|\rho^n\right\|_0.\label{errorfem1}
\end{eqnarray}
Here $Q_{-1}^\gamma=0,\,Q_k^{\gamma}=\sum_{j=0}^kg_j^{(\gamma,0)}>0$, and $\rho^0=0$ have been used in the third step.  Since
\begin{eqnarray}
(1-z)^\gamma=\sum_{k=0}^{\infty}g_k^{(\gamma,0)}z^k=Q_0^{\gamma}+\sum_{k=1}^{\infty}\left(Q_k^\gamma-Q_{k-1}^\gamma\right)z^k
=(1-z)\sum_{k=0}^{\infty}Q_k^{\gamma}z^k,
\end{eqnarray}
then $Q_k^{\gamma}$ are the coefficients of the power series of the function $(1-z)^{\gamma-1}$.
By the Stirling formula, it follows that \cite[p. 9]{Lubich:96}
\begin{eqnarray}
\sum_{k=0}^{n-1}(-1)^k   {\gamma-1\choose k}=(-1)^{n-1}{r-2\choose n-1}=\frac{n^{1-\gamma}}{\Gamma(2-\gamma)}+\mathcal{O}(n^{-\gamma}).
\end{eqnarray}
Hence
\begin{eqnarray}
\tau^{1-\gamma}\sum_{k=0}^{n-1}Q_{n-1-k}^\gamma=\frac{t_n^{1-\gamma}}{\Gamma(2-\gamma)}+\tau \mathcal{O}(t_n^{-\gamma})\le c_7 T^{1-\gamma}.\label{errorfem2}
\end{eqnarray}
Taking $\eta=\theta^n$ in (\ref{errorequation}) and  combining (\ref{approximationeq1}), (\ref{stabilityeq}), (\ref{stabilityeq2}),  (\ref{errorfem1}) and (\ref{errorfem2}),  one has
\begin{eqnarray}
&&K_{\gamma} \sum_{n=1}^N\tau\left|\theta^n\right|_{1}^2\le \frac{\tau(b-a)^2}{\pi^2 K_\gamma}
\sum_{n=1}^N\left\|r_1^n+r_2^n\right\|_0^2+2\tau^{1-\gamma}d_0^{(\gamma,\lambda)}\left\|\theta^0\right\|_0^2\nonumber\\
&&~~~~~~~~~~~~~~~~~~\le \frac{2(b-a)^2}{\pi^2 K_\gamma}T\left((b-a)c^2_6\tau^2+c^2_8h^{2\min\{l+1,q\}}\right),
\end{eqnarray}
where
\begin{eqnarray}
&&c_8=\left(c_4c_7T^{1-\gamma}\left(\lambda+p\sup_{x_0\in \Omega} U(x_0)\right)+c_4\lambda^\gamma\right)\left\|G_{x_0}(p,t)\right\|_{\infty,q}\nonumber\\
&&~~~~~~+c_4c_7T^{1-\gamma}\left\|\frac{\partial G_{x_0}(p,t)}{\partial t}\right\|_{\infty,q}.
\end{eqnarray}
By the triangle inequality and property (\ref{approximationeq1}), it yields
\begin{eqnarray} \label{eq61}
&& \left\||E|\right\|_{0,h,1}^2\le 2 \sum_{n=1}^N\tau\left|\theta^n\right|_{1}^2
+2 \sum_{n=1}^N\tau\left|\rho^n\right|_{1}^2\nonumber\\
&&~~~~~~~~~~~~~\le \frac{4(b-a)^2}{\pi^2 K_\gamma^2}T\left((b-a)c^2_6\tau^2+c^2_8h^{2\min\{l+1,q\}}\right)\nonumber\\
&&~~~~~~~~~~~~~~~~+2c^2_4\left\|G_{x_0}(p,t)\right\|_{\infty,q}^2 T h^{2\min\{l,q-1\}}.
\end{eqnarray}
The proof is complete.
\end{proof}

\section{ Numerical results}\label{sec5}
In this section, we present several examples to verify the theoretical results provided in the previous section.
In the examples, the parameters are taken as $\Omega=(0,1), K_r=1$ and $ U(x_0)=x_0$.

\begin{example}\label{example1}
In this example, we add a source term in model (\ref{modemode2}) such  that its exact solution is
\begin{eqnarray*}
G_{x_0}(p,t)=(t^2+1)e^{-(\lambda+px_0)t}\left(\sin(x_0)-x_0 sin(1)\right).
\end{eqnarray*}
The corresponding source term  and the initial  and  boundary conditions can be  derived from the exact solution.
\end{example}

By  formula  (\ref{huanyuan}), we can let
\begin{eqnarray}
G_{x_0}(p,t)=W_{x_0}(p,t)+\left(\sin(x_0)-x_0\sin(1)\right)e^{-(\lambda+px_0)t}.
\end{eqnarray}
Then $W_{x_0}(p,t)$ solves 
\begin{eqnarray}
&&e^{-\left(\lambda+px_0\right)t}\,{}_0^CD_t^{\gamma}\left(e^{\left(\lambda+px_0\right)t}W_{x_0}(p,t)\right)-\lambda^{\gamma}W_{x_0}(p,t)\nonumber\\
&&=K_{\gamma} \frac{\partial ^2}{\partial x_0^2}W_{x_0}(p,t)+g(x_0,p,t),
\end{eqnarray}
where
\begin{eqnarray*}
&&g(x_0,p,t)=e^{-(\lambda+px_0)t}\left(\sin(x_0)-x_0\sin(1)\right)\left(\frac{2}
{\Gamma(3-\gamma)}t^{2-\gamma}-\lambda^\gamma t^2\right)\nonumber\\
&&+K_\gamma t^2 e^{-(\lambda+px_0)t}\left[(p^2t^2-1)\sin(x_0)
-2pt\cos(x_0) + (2pt-p^2t^2x_0)\sin(1) \right].
\end{eqnarray*}

We  choose $l=1$  for  the finite element discretization.
Then at every time level $t=t_n$, both the differential matrixes of the finite difference  and the finite element schemes
are tridiagonal; the speedup method can be used  with the  cost $\mathcal{O}(M)$.
The numerical results are given in Tables \ref{tab:1-1}, \ref{tab:1-3}, and \ref{tab:1-5}, which well confirm the theoretical analysis. Removing the limitation of zero initial condition, instead of (\ref{eq16}), numerically one can use the discretization: 
\begin{eqnarray}
&&e^{-\left(\lambda+px_0\right)t}\,{}_0^CD_t^{\gamma}\left(e^{\left(\lambda+px_0\right)t}G_{x_0}(p,t)\right)\Big|_{t=t_n}\nonumber\\
&&=e^{-\left(\lambda+px_0\right)t}\,{}_0^CD_t^{\gamma}\left(e^{\left(\lambda+px_0\right)t}\left(G_{x_0}(p,t)
-e^{-\left(\lambda+px_0\right)t}G_{x_0}(p,0)\right)\right)\Big|_{t=t_n}\nonumber\\
&&=D_t^{\gamma,\lambda}\left(G_{x_0}(p,t)-e^{-(\lambda+px_0)t}\phi(p,x_0)\right)\Big|_{t=t_n}\nonumber\\
&&=\frac{1}{\tau^\gamma}\sum_{k=0}^ng_k^{(\gamma,\lambda)}e^{-px_0k\tau}G_{x_0}(p,t_{n-k})
-\frac{e^{-(\lambda+px_0)n\tau}}{\tau^\gamma} \sum_{k=0}^n{g_k^{(\gamma,0)}}\phi+\mathcal{O}(\tau).~~~~~~ \label{timediscrete2}
\end{eqnarray}
Using (\ref{timediscrete2}) in (\ref{timediscrete1}) leads to a new discretization scheme of (\ref{modemode2}).
The numerical results obtained with the corresponding finite difference and finite element schemes
are given in Tables \ref{tab:1-2}, \ref{tab:1-4}, and \ref{tab:1-6}, 
which show the same convergence order, but with slightly big numerical errors.

\begin{table}\fontsize{8.5pt}{11pt}\selectfont
\begin{center}
 \caption{Numerical results (the $\left\|\cdot\right\|_{0^{\prime},h,\infty}$ and $\left\|\cdot\right\|_{0,h,1}$ errors) of the finite difference scheme
 for Example \ref{example1} with $\lambda=3, T=1$, and  $h=1/2^{11}$.}
\begin{tabular}{cc|cc|cc|cc}
  \hline
  $ type $    & $\tau$   &\multicolumn{2}{c|}{$\gamma=0.3,p=1+i$ }  &\multicolumn{2}{c|}{$\gamma=0.5,p=5$ } & \multicolumn{2}{c}{$\gamma=0.8,p=10i$ }  \\
  \cline{3-8}
                                     &        & Err       & Rate             &Err          & rate               &Err           & Rate      \\
   \hline
                                     &$1/2^7$      & 1.1075e-06   & ---            & 1.2351e-06     &---             & 5.8745e-06     & ---       \\
$\left\|\cdot\right\|_{0,h,\infty}$  &$1/2^8$     &  5.5468e-07 &  0.9976          & 6.2154e-07    &0.9907          & 2.9626e-06     &0.9876     \\
                                     &$1/2^9$      &2.7751e-07   &   0.9991         & 3.1188e-07    & 0.9949         & 1.4869e-06    & 0.9946      \\
\hline
                                    &$1/2^7$      & 2.6618e-06  & ---             & 3.5930e-06     & ---            &1.6728e-05     & ---       \\
$\left\|\cdot\right\|_{0,h,1}$      &$1/2^8$      & 1.3343e-06  &  0.9964           & 1.8094e-06     &0.9897          & 8.4359e-06      & 0.9877    \\
                                    &$1/2^9$      & 6.6784e-07   &  0.9985          & 9.0812e-07    &0.9946          & 4.2324e-06     &0.9951      \\
\hline
\end{tabular} \label{tab:1-1}
\end{center}
\end{table}
\begin{table}\fontsize{8.5pt}{11pt}\selectfont
\begin{center}
 \caption{Numerical results (the $\left\|\cdot\right\|_{0^{\prime},h,\infty}$ and $\left\|\cdot\right\|_{0,h,1}$ errors, implemented with the time scheme
 (\ref{timediscrete2})) of the finite difference scheme
 for Example \ref{example1} with $\lambda=3, T=1$, and  $h=1/2^{11}$.}
\begin{tabular}{cc|cc|cc|cc}
  \hline
  $ type $    & $\tau$   &\multicolumn{2}{c|}{$\gamma=0.3,p=1+i$ }  &\multicolumn{2}{c|}{$\gamma=0.5,p=5$ } & \multicolumn{2}{c}{$\gamma=0.8,p=10i$ }  \\
  \cline{3-8}
                                     &        & Err       & Rate             &Err          & rate               &Err           & Rate      \\
   \hline
                                     &$1/2^7$      & 1.6347e-05   & ---            & 2.0377e-05     &---             &  6.9109e-05     & ---       \\
$\left\|\cdot\right\|_{0,h,\infty}$  &$1/2^8$     &  8.2423e-06   &0.9879          &  1.0326e-05     &0.9807         &  3.5002e-05     & 0.9814     \\
                                     &$1/2^9$      & 4.1381e-06   &0.9941         & 5.1977e-06      &  0.9903         & 1.7610e-05
      &0.9911      \\
\hline
                                    &$1/2^7$      & 4.6496e-05 & ---             &  6.8074e-05      & ---            &  1.9591e-04    & ---       \\
$\left\|\cdot\right\|_{0,h,1}$      &$1/2^8$      &2.3450e-05    &0.9875           & 3.4511e-05     &  0.9800        & 9.9249e-05     &  0.9811    \\
                                    &$1/2^9$       &1.1776e-05  &0.9938           & 1.7378e-05     &  0.9898       &  4.9944e-05     & 0.9907      \\
\hline
\end{tabular} \label{tab:1-2}
\end{center}
\end{table}

\begin{table}\fontsize{8.5pt}{11pt}\selectfont
\begin{center}
 \caption{Numerical results (the $\left\|\cdot\right\|_{0^{\prime},h,\infty}$ and $\left\|\cdot\right\|_{0,h,1}$ errors) of the finite difference scheme
 for Example \ref{example1} with $\lambda=3, T=1$, and $\tau=h^2$.}
\begin{tabular}{cc|cc|cc|cc}
  \hline
  $ type $    &   $h$   &\multicolumn{2}{c|}{$\gamma=0.3,p=1+i$ }  &\multicolumn{2}{c|}{$\gamma=0.5,p=5$ } & \multicolumn{2}{c}{$\gamma=0.8,p=10i$ }  \\
  \cline{3-8}
                                     &        & Err       & Rate             &Err          & rate               &Err           & Rate      \\
   \hline
                                     &$1/2^4$      & 2.9913e-06   & ---            &  6.6574e-06     &---             & 7.5610e-05     & ---       \\
$\left\|\cdot\right\|_{0,h,\infty}$  &$1/2^5$     &  7.4620e-07  & 2.0031          &  1.6625e-06     &  2.0016        &1.8583e-05      &2.0246     \\
                                     &$1/2^6$      & 1.8646e-07   & 2.0007         &  4.1552e-07      & 2.0004        & 4.6304e-06      &2.0048      \\
\hline
                                    &$1/2^4$      &  7.9241e-06  & ---             &  2.1450e-05      & ---            & 5.3436e-04     & ---       \\
$\left\|\cdot\right\|_{0,h,1}$      &$1/2^5$      & 1.9817e-06   &  1.9995            &  5.3784e-06     &1.9957       &  1.3292e-04       &  2.0073    \\
                                    &$1/2^6$       & 4.9547e-07   &  1.9999           & 1.3456e-06     &  1.9989        & 3.3188e-05     & 2.0018     \\
\hline
\end{tabular} \label{tab:1-3}
\end{center}
\end{table}
\begin{table}\fontsize{8.5pt}{11pt}\selectfont
\begin{center}
 \caption{Numerical results (the $\left\|\cdot\right\|_{0^{\prime},h,\infty}$ and $\left\|\cdot\right\|_{0,h,1}$ errors, implemented with the time scheme
 (\ref{timediscrete2})) of the finite difference scheme
 for Example \ref{example1} with $\lambda=3, T=1$, and $\tau=h^2$.}
\begin{tabular}{cc|cc|cc|cc}
  \hline
  $ type $    &   $h$   &\multicolumn{2}{c|}{$\gamma=0.3,p=1+i$ }  &\multicolumn{2}{c|}{$\gamma=0.5,p=5$ } & \multicolumn{2}{c}{$\gamma=0.8,p=10i$ }  \\
  \cline{3-8}
                                     &        & Err       & Rate             &Err          & rate               &Err           & Rate      \\
   \hline
                                     &$1/2^4$      & 1.1756e-05  & ---            &   2.3316e-05     &---           & 2.7890e-04      & ---       \\
$\left\|\cdot\right\|_{0,h,\infty}$  &$1/2^5$     &  2.9399e-06   & 1.9996          & 5.8392e-06     &1.9975        &6.8873e-05       &2.0178     \\
                                     &$1/2^6$      & 7.3505e-07   & 1.9998         &  1.4606e-06     & 1.9992        & 1.7179e-05      & 2.0033      \\
\hline
                                    &$1/2^4$      & 2.9254e-05  & ---             &   6.3071e-05     & ---            & 1.4813e-03     & ---       \\
$\left\|\cdot\right\|_{0,h,1}$      &$1/2^5$      & 7.3311e-06    & 1.9966           &  1.5825e-05   & 1.9947      &  3.6924e-04       &2.0042    \\
                                    &$1/2^6$       & 1.8339e-06   & 1.9991          & 3.9600e-06     & 1.9987        &9.2242e-05     & 2.0010    \\
\hline
\end{tabular} \label{tab:1-4}
\end{center}
\end{table}

\begin{table}\fontsize{8.5pt}{11pt}\selectfont
\begin{center}
 \caption{Numerical results (the $\left\|\cdot\right\|_{0,h,1}$-error) of the finite element scheme
  for Example \ref{example1} with $\lambda=3$ and $T=1$.}
\begin{tabular}{cc|cc|cc|cc}
  \hline
  $ \tau$      & $h$   &\multicolumn{2}{c|}{$\gamma=0.3,p=1+i$ }  &\multicolumn{2}{c|}{$\gamma=0.5,p=5$ } & \multicolumn{2}{c}{$\gamma=0.8,p=10i$ }  \\
  \cline{3-8}
                &        & Err       & Rate                         &Err          & rate                        &Err           & Rate      \\
   \hline
                &$1/2^4$    &3.1340e-04  & ---                       & 2.9617e-04     &---                       & 2.8873e-03    & ---       \\
$\tau=h$        &$1/2^5$    &1.5555e-04  &1.0106                     & 1.4700e-04     & 1.0106                   & 1.4190e-03     & 1.0248     \\
                &$1/2^6$    &7.7477e-05  & 1.0055                    & 7.3149e-05     &  1.0069                   &7.0208e-04      & 1.0151      \\
\hline
                &$1/2^4$    &3.0837e-04  & ---                       & 2.8818e-04      & ---                     & 2.7588e-03     & ---       \\
  $\tau=h^2$    &$1/2^5$    &1.5410e-04   & 1.0008                    & 1.4446e-04      &0.9963                   & 1.3844e-03     &0.9947   \\
                &$1/2^6$    &7.7037e-05   & 1.0002                    & 7.2275e-05     &0.9991                   & 6.9285e-04     &0.9987      \\
\hline
\end{tabular} \label{tab:1-5}
\end{center}
\end{table}
\begin{table}\fontsize{8.5pt}{11pt}\selectfont
\begin{center}
 \caption{Numerical results (the $\left\|\cdot\right\|_{0,h,1}$-error, implemented with the time scheme
 (\ref{timediscrete2})) of the finite element scheme
  for Example \ref{example1} with $\lambda=3$ and $T=1$.}
\begin{tabular}{cc|cc|cc|cc}
  \hline
  $ \tau$      & $h$   &\multicolumn{2}{c|}{$\gamma=0.3,p=1+i$ }  &\multicolumn{2}{c|}{$\gamma=0.5,p=5$ } & \multicolumn{2}{c}{$\gamma=0.8,p=10i$ }  \\
  \cline{3-8}
                &        & Err       & Rate                         &Err          & rate                        &Err           & Rate      \\
   \hline
                &$1/2^4$    & 3.1677e-03  & ---                       & 2.3810e-03     &---                       & 9.9236e-03    & ---       \\
$\tau=h$        &$1/2^5$    &1.6769e-03 & 0.9176                    & 1.2970e-03     & 0.8763                   & 4.9773e-03     & 0.9955     \\
                &$1/2^6$    & 8.6229e-04  & 0.9595                   & 6.7730e-04     & 0.9374                   &2.4903e-03     & 0.9990    \\
\hline
                &$1/2^4$    &3.5166e-03  & ---                       & 2.7857e-03      & ---                     &  9.8866e-03     & ---       \\
  $\tau=h^2$    &$1/2^5$    & 1.7676e-03   & 0.9924                   & 1.4047e-03    &0.9878                    & 4.9619e-03     & 0.9946    \\
                &$1/2^6$    & 8.8499e-04   & 0.9981                   & 7.0384e-04     & 0.9969                  &  2.4833e-03    &0.9986      \\
\hline
\end{tabular} \label{tab:1-6}
\end{center}
\end{table}

\begin{example}\label{example2}
In this example, we consider the  model (\ref{modemode2}) itself  with
\begin{eqnarray*}
\phi(p,x_0)=0,\quad \psi_l(p,t)=t,\quad \psi_r(p,t)=e^{- t}-1.
\end{eqnarray*}
\end{example}
By  formula  (\ref{huanyuan}), one can let
\begin{eqnarray*}
G_{x_0}(p,t)=W_{x_0}(p,t)+\left[(e^{-t}-1)e^{(\lambda+p)t}x_0+te^{\lambda t}(1-x_0)\right]e^{-(\lambda+px_0)t}
\end{eqnarray*}
to obtain a model of $W_{x_0}(p,t)$ with
\begin{eqnarray*}
&&g=-\left[x_0\,{}_0^CD_t^{\gamma}\left((e^{-t}-1)e^{(\lambda+p)t}\right)+(1-x_0)\,{}_0^CD_t^{\gamma}(te^{\lambda t})\right]e^{-(\lambda+px_0)t}\\
&&~~+\lambda^{\gamma}\left((e^{-t}-1)x_0e^{pt}+t(1-x_0)
-1\right)e^{-px_0t}+\lambda e^{-\left(\lambda+px_0\right)t}{}_0I_t^{1-\gamma}(e^{\lambda t})~~~~~\\
&&~~+K_\gamma\left[(e^{- t}-1)(p^2t^2x_0-2pt)e^{pt}+t^2(-p^2tx_0+p^2t+2p)\right]e^{-ptx_0},
\end{eqnarray*}
where for every time $t=t_n$,   $_0I_t^{1-\gamma}(e^{\lambda t})$ in $g(x_0,p,t)$ can be rewritten as
\begin{eqnarray}
&&_0I_t^{1-\gamma}(e^{\lambda t})\Big|_{t=t_n}=\frac{1}{\Gamma(1-\gamma)}\int_0^{t_n}(t_n-\xi)^{-\gamma}e^{\lambda \xi}d\xi\nonumber\\
&&~~~~~~~~~~~~~~~~~~~=\frac{t_n^{1-\gamma}}{2^{1-\gamma}\Gamma(1-\gamma)}\int_{-1}^1(1-\xi)^{-\gamma}e^{\frac{\lambda t_n(1+\xi)}{2}}d\xi.\label{Gaussintegral}
\end{eqnarray}
Then the Gauss Jacobi quadrature with the  weight $(1-\xi)^{-\gamma}$ can be used to compute (\ref{Gaussintegral}) \cite[Appendix A, p. 447]{Hesthaven:07}. 
Note that
\begin{eqnarray}\label{eq66}
&&{}_0^CD_t^{\gamma}\left((e^{- t}-1)e^{(\lambda+p)t}\right)={}_0I_t^{1-\gamma}\left(\frac{d}{dt}\left((e^{-t}-1)e^{(\lambda+p)t}\right)\right),\\\label{eq67}
&&{}_0^CD_t^{\gamma}(te^{\lambda t})={}_0I_t^{1-\gamma}\left(\frac{d}{dt}\left(te^{\lambda t}\right)\right);
\end{eqnarray}
so, Eqs. (\ref{eq66}) and (\ref{eq67}) can also be handled similar to (\ref{Gaussintegral}).
The numerical results are presented in Tables \ref{tab:2-1}-\ref{tab:2-3}, where
in the finite difference scheme, the numerical solutions obtained at  $ \tau=h=1/2^{12}$ have been regarded as the exact solutions; and in the finite element scheme, the $\left|\cdot\right|_1^*$-errors at $h$ denote $\left|G_{{h}/{2}}^N-G_{h}^N\right|_1$,
 which can be used to check the convergence order with $\left|\cdot\right|_1$ norm. In fact, there hold
 \begin{eqnarray*}
 &&\left|G_{{h}/{2}}^N-G_{h}^N\right|_1
 \le\left|G_{x_0}(p,t_N)-G_{{h}/{2}}^N\right|_1+\left|G_{x_0}(p,t_N)-G_{h}^N\right|_1\\
 &&~~~~~~~~~~~~~~~~~~\le 2 \left|G_{x_0}(p,t_N)-G_{h}^N\right|_1.
 \end{eqnarray*}

\begin{table}\fontsize{8.5pt}{11pt}\selectfont
\begin{center}
 \caption{Numerical results (the $\left\|\cdot\right\|_{h,\infty}$ and $\left|\cdot\right|_{h,1}$ errors)  of the finite difference scheme
 for Example \ref{example2}  with $p=5i,T=0.5$, and $h=1/2^{11}$.}
\begin{tabular}{cc|cc|cc|cc}
  \hline
  $ type  $                  & $\tau$   &\multicolumn{2}{c|}{$\gamma=0.3,\lambda=0$ }  &\multicolumn{2}{c|}{$\gamma=0.5,\lambda=3$ } & \multicolumn{2}{c}{$\gamma=0.8,\lambda=5$ }  \\
  \cline{3-8}                  &        & Err       & Rate                         &Err     & rate              &Err        & Rate                \\
   \hline
                                     &$1/2^7$    &2.1296e-04    & ---            & 6.4417e-04    &---           & 2.6421e-03       & ---         \\
 $\left\|\cdot\right\|_{h,\infty}$   &$1/2^8$    &  1.0480e-04   &  1.0229         & 3.1900e-04   &1.0139         & 1.3162e-03       & 1.0053   \\
                                     &$1/2^9$     &5.0722e-05   &  1.0470         & 1.5484e-04   & 1.0428        &6.4073e-04       & 1.0386     \\
\hline
                                    &$1/2^7$    & 4.7631e-04     & ---            &  1.4418e-03    & ---          & 5.9332e-03      & ---            \\
$\left|\cdot\right|_{h,1}$          &$1/2^8$    &2.3441e-04    &   1.0229        & 7.1397e-04   & 1.0139          &2.9557e-03       & 1.0053   \\
                                    &$1/2^9$    & 1.1345e-04     &  1.0470        & 3.4655e-04   & 1.0428         & 1.4389e-03      & 1.0386      \\
\hline
\end{tabular} \label{tab:2-1}
\end{center}
\end{table}
\begin{table}\fontsize{8.5pt}{11pt}\selectfont
\begin{center}
 \caption{Numerical results (the $\left\|\cdot\right\|_{h,\infty}$ and $\left|\cdot\right|_{h,1}$ errors)  of the finite difference scheme
 for Example \ref{example2}  with $p=5i,T=0.5$, and $\tau=h^2$.}
\begin{tabular}{cc|cc|cc|cc}
  \hline
  $ type  $                  & $h$   &\multicolumn{2}{c|}{$\gamma=0.3,\lambda=0$ }  &\multicolumn{2}{c|}{$\gamma=0.5,\lambda=3$ } & \multicolumn{2}{c}{$\gamma=0.8,\lambda=5$ }  \\
  \cline{3-8}                  &        & Err       & Rate                         &Err     & rate              &Err        & Rate                \\
   \hline
                                        &$1/2^4$    & 1.0019e-03    & ---            & 1.1577e-03     &---          & 1.6511e-03      & ---         \\
 $\left\|\cdot\right\|_{h,\infty}$     &$1/2^5$    & 2.4892e-04    &  2.0090         & 2.8575e-04   & 2.0184        & 3.8684e-04      &2.0937   \\
                                       &$1/2^6$     & 6.0962e-05    & 2.0297         & 6.8576e-05    &2.0590         &7.1566e-05      &2.4344    \\
\hline
                                        &$1/2^4$    &2.4312e-03     & ---            &2.7401e-03    & ---           & 3.7190e-03      & ---            \\
$\left|\cdot\right|_{h,1}$              &$1/2^5$     &6.0597e-04    &  2.0043        &6.7931e-04    &   2.0121      & 8.7433e-04      &2.0887   \\
                                        &$1/2^6$    & 1.4917e-04    &  2.0223        & 1.6489e-04    &  2.0426      &  1.6538e-04     &2.4024      \\
\hline
\end{tabular} \label{tab:2-2}
\end{center}
\end{table}
\begin{table}\fontsize{8.5pt}{11pt}\selectfont
\begin{center}
 \caption{Numerical results ($\left|\cdot\right|^*_{1}$-error) of the finite element scheme
   for Example \ref{example2}  with $p=5i$, and $T=0.5$.}
\begin{tabular}{cc|cc|cc|cc}
  \hline
  $ \tau $         & $h$   &\multicolumn{2}{c|}{$\gamma=0.3,\lambda=0$ }  &\multicolumn{2}{c|}{$\gamma=0.5,\lambda=3$ } & \multicolumn{2}{c}{$\gamma=0.8,\lambda=5$ }  \\
  \cline{3-8}       &        & Err       & Rate                        &Err     & rate              &Err        & Rate                \\
   \hline
                   &$1/2^4$    & 6.8170e-02  & ---                       &   7.2097e-02    &---          & 8.0231e-02        & ---         \\
 $\tau=h$          &$1/2^5$    &  3.4093e-02   & 0.9997                   & 3.6046e-02   &  1.0001      & 4.0452e-02        &  0.9879   \\
                   &$1/2^6$     &1.7047e-02   &  1.0000                   & 1.8021e-02    & 1.0002       &2.0323e-02       & 0.9931     \\
\hline
                   &$1/2^4$    & 6.8112e-02   & ---                       &  7.1635e-02    & ---          &  7.5383e-02      & ---            \\
$\tau=h^2$         &$1/2^5$    & 3.4072e-02    & 0.9993                 & 3.5832e-02      &  0.9994      & 3.7672e-02      & 1.0008  \\
                   &$1/2^6$    & 1.7038e-02    & 0.9998                  & 1.7918e-02    &  0.9998      & 1.8833e-02      &1.0002      \\
\hline
\end{tabular} \label{tab:2-3}
\end{center}
\end{table}
\begin{figure}
\begin{center}
\includegraphics[width=2.2in,height=1.3in,angle=0]{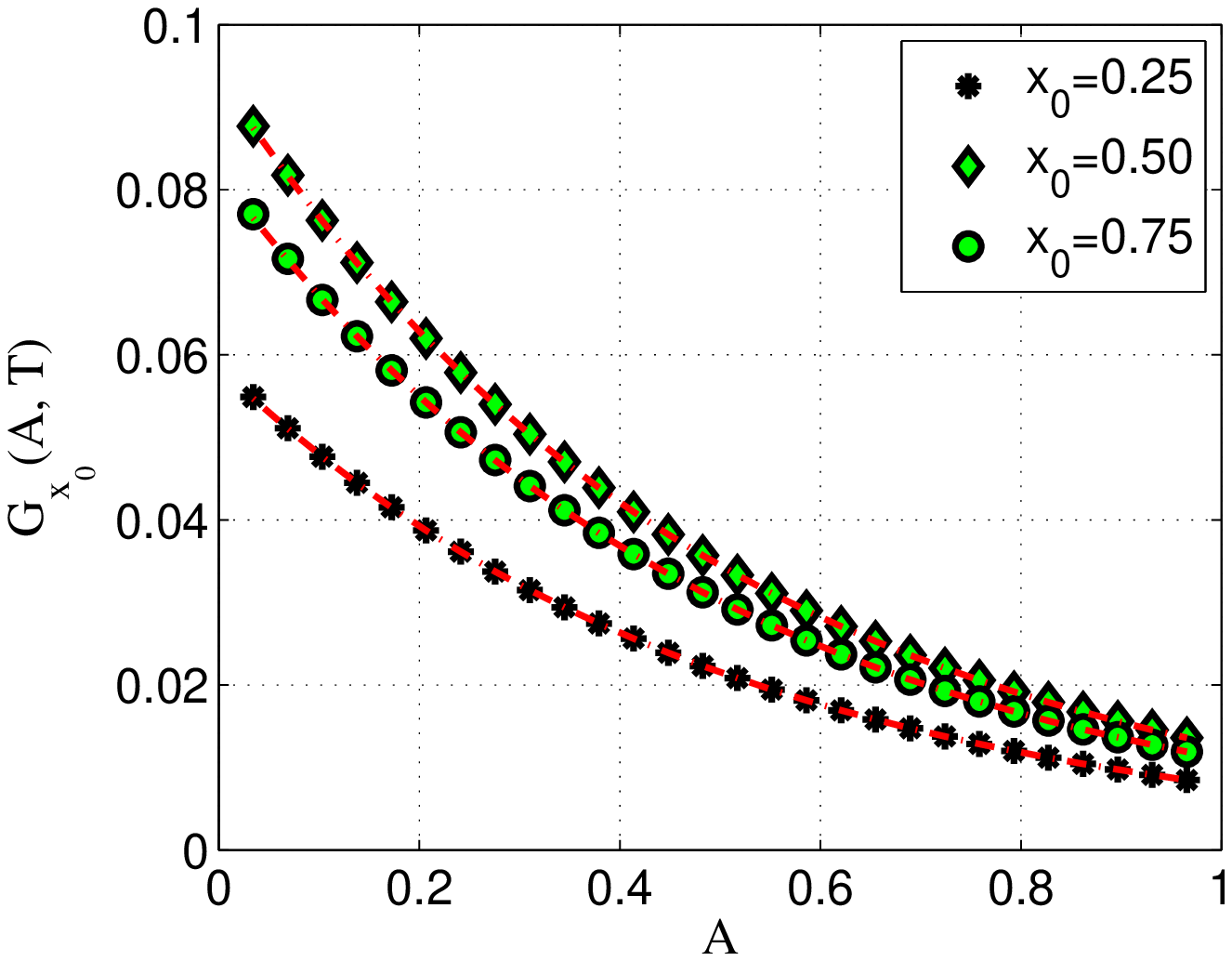}
\includegraphics[width=2.2in,height=1.3in,angle=0]{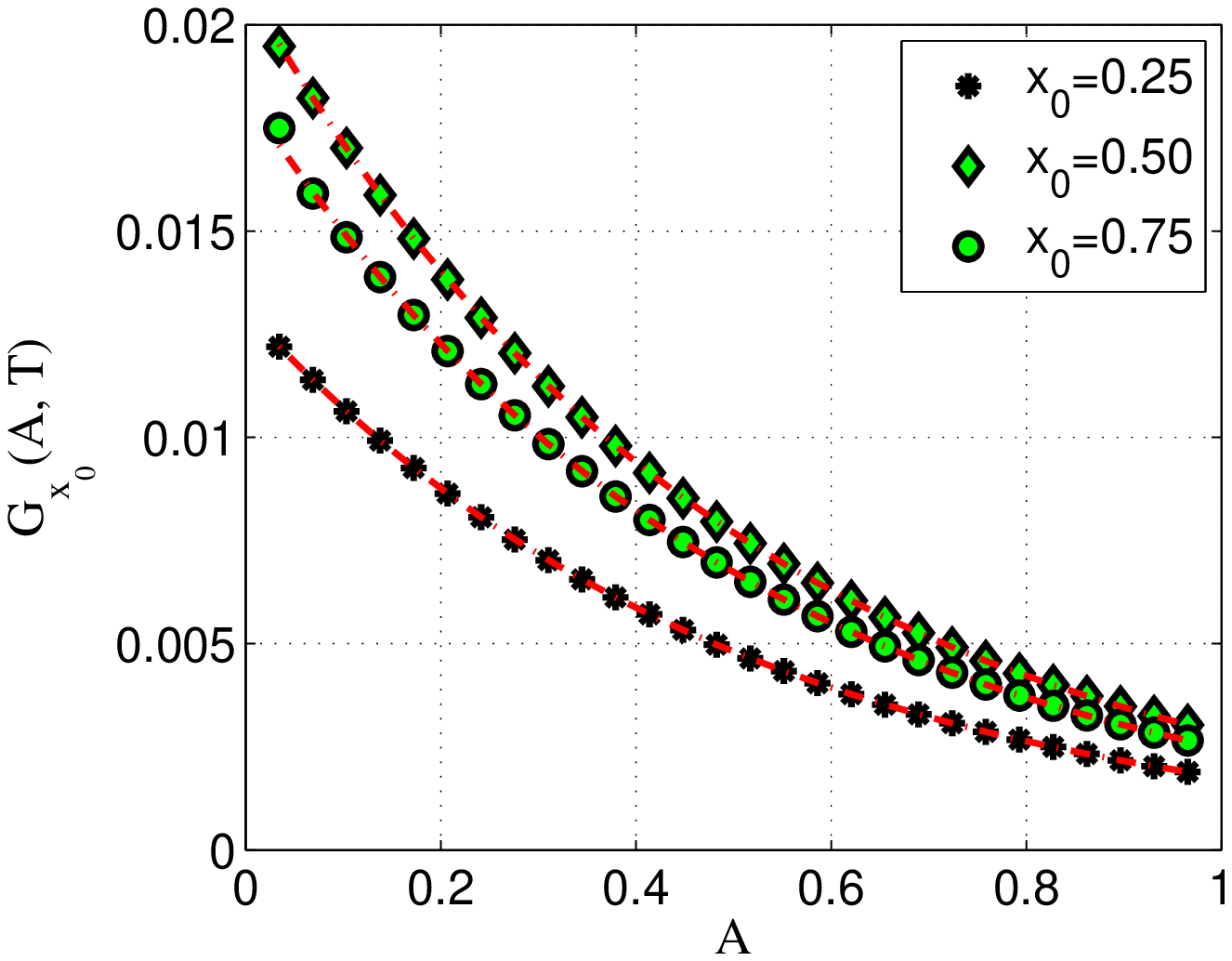}
\caption{Simulation results of $G_{x_0}(A,t)$ (left: $\lambda=0,\gamma=0.3$, and right: $\lambda=3,\gamma=0.6$) for Example (\ref{example3})
with  $h=\tau=1/2^{10}, K_1=25, K_2=15, \tilde{A}=18.4$, and $T=0.5$; (red) real lines are for analytical solutions.}\label{fig3-1}
\end{center}
\end{figure}

\begin{example}\label{example3}
If the {\rm PDF} $G_{x_0}(A,t)$ is needed, the numerical inversion of the Laplace transform (NIL) must be used. Let the exact solution of (\ref{modemode2}) be
\begin{eqnarray}\label{simulationeq1}
 G_{x_0}(p,t)=t^2 e^{-\lambda t}(x_0-x_0^3)/(p+2).
\end{eqnarray}
Then the corresponding right hand term $f(x_0,p,t)$ can be derived from the exact solution.
\end{example}

We use the Fourier series Euler summation method proposed in \cite{Abate:95} to simulate $G_{x_0}(A,t)$:
\begin{eqnarray*}
&&G_{x_0}(A,t)=\frac{1}{2\pi i}\int_{\sigma-\infty i}^{\sigma+\infty i}e^{pA} G_{x_0}(p,t)dp
\approx \sum_{k=0}^{K_2}{K_2\choose k}2^{-K_2}\,s_{K_1+k}(A),
\end{eqnarray*}
where  $i=\sqrt{-1}$ and $s_{n}(A)$ is the $n$-th partial sum:
\begin{eqnarray*}
s_n(A)= \frac{e^{\tilde{A}/2}}{2A}{\rm Re}\left(G_{x_0}\left(\frac{\tilde{A}}{2A},t\right)\right)
+\frac{e^{\tilde{A}/2}}{A}\sum_{j=1}^{n}(-1)^j {\rm Re}\left(G_{x_0}\left(\frac{\tilde{A}}{2A}+\frac{j\pi i}{A},t\right)\right).
\end{eqnarray*}
In practice, Abate and Whitt \cite{Abate:95} suggest to set $\tilde{A}$ equal to $18.4$.
The numerical results are plotted in Figure \ref{fig3-1},
being compared with the analytical inversion of $G_{x_0}(A,t)=t^2 e^{-\lambda t-2A}(x_0-x_0^3)$
(denoted by (red) real lines). 
For fixed $x_0$ and $t$, the finite difference scheme (\ref{differencescheme2})
has been  used to compute $G_{x_0}(p_j,t)$. Here, $ p_j=\frac{\tilde{A}}{2A}+\frac{j\pi i}{A}$ and $ j=0,\cdots,K_1+K_2$.

\begin{remark}
Though there have been a lot of works on the NIL (see, e.g., \cite{Cohen:10,lseger:06} and the references therein),
to ensure the validity and stability of the inversion, besides the values $G_{x_0}(p_j,t)$ the further requirements on $G_{x_0}(p,t)$ (even on $G_{x_0}(A,t)$, being hard to know in advance)
 and the multi-precision computation are usually needed. 
 We will discuss these in our future work.
\end{remark}

\section{Conclusions}
Based on the presented discretization of the tempered fractional substantial derivative, we have proposed the finite difference and finite element methods for the backward time tempered fractional Feynman-Kac equation with the detailed unconditional stability and convergence analyses. Some important ideas/techniques for the discretization of the tempered fractional substantial derivative and the proof of the stability and convergence are introduced. The effectiveness of the schemes, including unconditional stability and the order of convergence, is verified by the numerical experiments.

\def\ack{\section*{Acknowledgements}%
  \addtocontents{toc}{\protect\vspace{6pt}}%
  \addcontentsline{toc}{section}{Acknowledgements}}
\ack{This work was supported by the National Natural Science Foundation of China under Grant  No. 11271173.}


\appendix

\section{Proof of Eq. (\ref{IdenttityOperator2}):}\label{appendix1}
First,  ${}_0D_t^{1-\gamma}{}_0I_t^{1-\gamma}v(t)=v(t)$ is a direct result of \cite[Property 1.18]{Zhou:14}.
Note that for $v(t)\in C^1[0,t]$, one has \cite[Eq. (2.113), P. 71]{Podlubny:99}
\begin{eqnarray*}
{}_0I_t^{\gamma}\,{}_0D_t^{\gamma}v(t)=v(t)-\left[{}_0I_t^{1-\gamma}v(t)\right]_{t=0}\frac{t^{\alpha-1}}{\Gamma(\alpha)}.
\end{eqnarray*}
And using the integration by part, it holds that
\begin{eqnarray*}
{}_0I_t^{1-\gamma}v(t)=\frac{t^{1-\gamma}v(0)}{\Gamma(2-\gamma)}+\frac{1}{\Gamma(2-\gamma)}\int_0^t(t-\xi)^{1-\gamma}v^{\prime}(\xi)d\xi.
\end{eqnarray*}
Then (\ref{IdenttityOperator2}) follows from $\left[{}_0I_t^{1-\gamma}v(t)\right]_{t=0}=0$.

\section{Proof of Eq. (\ref{eq6}):}\label{appendix2}
Noting that
\begin{eqnarray}
&&\frac{\partial }{\partial t}\left( e^{-(\lambda+pU(x_0))t}{}_0I_t^{\gamma}\left(e^{(\lambda+pU(x_0))t}G_{x_0}(p,t)\right)\right)\nonumber\\
&&~~=-(\lambda+pU(x_0))\left( e^{-(\lambda+pU(x_0))t}{}_0I_t^{\gamma}\left(e^{(\lambda+pU(x_0))t}G_{x_0}(p,t)\right)\right)\nonumber\\
&&~~~~~+e^{-(\lambda+pU(x_0))t}\frac{\partial }{\partial t}\left({}_0I_t^{\gamma}\left(e^{(\lambda+pU(x_0))t}G_{x_0}(p,t)\right)\right),\nonumber
\end{eqnarray}
then using the definitions of the Riemann-Liouville derivative and the Caputo derivative, and (\ref{substantialdefinition}), one arrives the desired result.


\end{document}